\documentclass[10pt]{article}
\usepackage{amsmath,amssymb}
\usepackage{enumerate}
\usepackage[dvipsnames]{xcolor}

\textheight=19
cm \textwidth=14cm \textwidth 144 mm \textheight
226mm \hoffset=-10 mm \voffset=-15mm

\allowdisplaybreaks[1]

\newtheorem{theorem}{Theorem}[section]

\newtheorem{lemma}[theorem]{Lemma}
\newtheorem{proposition}[theorem]{Proposition}

\newtheorem{definition}[theorem]{Definition}
\newtheorem{remark}[theorem]{Remark}

\newtheorem{question}[theorem]{Question}

\newcommand{\Z}{\mathbb{Z}}

\renewcommand{\ker}{\operatorname{Ker}}
\newcommand{\id}{\operatorname{id}}

\newcommand{\Sym}{\operatorname{Sym}}
\newcommand{\aut}{\operatorname{Aut}}

\newcommand{\soc}{\operatorname{Soc}}

\newcommand{\Aut}{\operatorname{Aut}}

\newcommand{\Ret}{\operatorname{Ret}}
\newcommand{\gr}{\operatorname{gr}}

\newenvironment{proof}{\par\noindent{ Proof.}}{$\qed$\par\bigskip}
\newcommand{\qed}{\enspace\vrule  height6pt  width4pt  depth2pt}
\usepackage{color}

\begin{document}
\title{New simple solutions of the Yang--Baxter equation and their permutation
groups}
\author{F. Ced\'o \and J. Okni\'{n}ski}
\date{}

\maketitle

\vspace{30pt}
 \noindent \begin{tabular}{llllllll}
  F. Ced\'o && J. Okni\'{n}ski \\
 Departament de Matem\`atiques &&  Institute of
Mathematics \\
 Universitat Aut\`onoma de Barcelona &&    University of Warsaw\\
08193 Bellaterra (Barcelona), Spain    &&  Banacha 2, 02-097 Warsaw, Poland \\
 Ferran.Cedo@uab.cat && okninski@mimuw.edu.pl
\end{tabular}\\

\vspace{30pt} \noindent Keywords: Yang--Baxter equation,
set-theoretic solution,
 indecomposable solution,  simple solution, brace\\

\noindent 2010 MSC: Primary 16T25, 20B15, 20F16 \\

\begin{abstract}
    A new class of indecomposable, irretractable, involutive, non-degen\-erate
    set-theoretic  solutions of the Yang--Baxter equation is constructed.
    This class complements the class of such solutions constructed in \cite{CO22}
    and {together they} generalize the class of solutions described
    in \cite[Theorem 4.7]{CO21}. Necessary and sufficient conditions are
    found in order that these new solutions are simple. For a rich subclass
    of these solutions the structure of their permutation groups,
    considered as left
    braces, is determined. In particular, these results answer a question
    stated in \cite{CO21}.
    In the finite case, all these solutions have square cardinality.
    A new class of finite simple solutions of non-square cardinality
    such that their permutation groups are simple left braces is also
    constructed.
\end{abstract}

\newpage

\section{Introduction}

This work is devoted to the study of non-degenerate involutive
set-theoretic solutions of the Yang-Baxter equation, YBE for
short. The theory has exploded in the recent two decades, to a
large extent due to the fundamental papers \cite{ESS,GIVdB}. In
particular, the key notions of indecomposable solutions and
irretractable solutions were defined in \cite{ESS} and an
important tool, called the permutation group associated to a
solution, was introduced. The role of indecomposable solutions as
the building blocks in the theory became clear and certain
important results have been obtained \cite{Rump1}. A new impetus
came later due to the introduction of a new powerful tool,
algebraic structures called braces, in \cite{R07}, see also
\cite{CedoSurvey}. Several new ideas and methods, have been
recently developed and applied in this context \cite{CJOComm,
CO21, lebed_and_co, rump2020,Smok, Smokt} and many new classes of
solutions have been constructed \cite{CJOprimit, CO23}. But the
challenging problem of describing all indecomposable solutions
seems very difficult and remains wide open \cite{CPR,
Jedl-Pilit-Zam,Jedl-Pilit-Zam2021,Rump2020,SmokSmok}.

The first aim of this paper is to construct new indecomposable,
irretractable, involutive, non-degenerate set-theoretic solutions
$(X,r)$ of the YBE.

One of our main motivations comes from problems concerning the so
called simple solutions, introduced in \cite{V}. Namely, by
\cite[Proposition~4.1]{CO21}, if $(X,r)$ is a simple solution of
the YBE and $|X|>2$, then $(X,r)$ is indecomposable. Moreover, by
\cite[Proposition~4.2]{CO21}, if $|X|$ is not a prime, then
$(X,r)$ is irretractable.  The main result of \cite{CO23}
shows that every indecomposable solution of square-free
cardinality must be retractable. In \cite[Section 4]{CO21}, a
method is given to construct a big family of simple solutions
$(X,r)$ of the YBE of cardinality $n^2$. In \cite[Section
6]{CO21}, an alternative construction is given of some of the
simple solutions constructed in \cite[Section 4]{CO21} using the
asymmetric product of left braces. The following question, stated
in \cite[Question 7.4]{CO21}, is natural: Consider the
indecomposable and irretractable solutions constructed in
\cite[Theorem 4.7]{CO21}. Can they be constructed using the
asymmetric product of left braces?

In \cite{CO22}, a bigger family of indecomposable and
irretractable solutions of the YBE is constructed, that contains
the solutions built in \cite[Theorem 4.7]{CO21} for the value
of the parameter $t=1$. Moreover, a criterion for the simplicity
of these solutions is provided. In \cite[Section 4]{CO22}, it is
proven that these solutions can be constructed using the
asymmetric product of left braces.

In Section \ref{section5} we extend the results in \cite{CO22},
constructing a rich class of indecomposable and irretractable
solutions of the YBE depending on a parameter $t\in\Aut(A)$, where
$A$ is an abelian group, that includes the solutions described in
\cite[Theorem 4.7]{CO21}, for arbitrary invertible $t\in\Z/(n)$.
We also give necessary and sufficient conditions in order that
these solutions are simple.

In Section \ref{section7}, it is proven that if $t-\id$ is
invertible, then the solutions constructed in Section
\ref{section5} can be also obtained using the asymmetric product
of left braces. This allows us to determine the structure of their
permutation groups as left braces.

In Section \ref{section8}, we construct a new family of
indecomposable and irretractable finite solutions of the YBE of
non-square cardinality. We also show that some of them are simple
and their permutation groups are simple as left braces.

\section{Preliminaries} \label{prelim}
Let $X$ be a non-empty set and  let  $r:X\times X \longrightarrow
X\times X$ be a map. For $x,y\in X$ we put $r(x,y) =(\sigma_x (y),
\gamma_y (x))$. Recall that $(X,r)$ is an involutive,
non-degenerate set-theoretic solution of the Yang--Baxter equation
if $r^2=\id$, all the maps $\sigma_x$ and $\gamma_y$ are bijective
maps from $X$ to itself and
  $$r_{12} r_{23} r_{12} =r_{23} r_{12} r_{23},$$
where $r_{12}=r\times \id_X$ and $r_{23}=\id_X\times \ r$ are maps
from $X^3$ to itself. Because $r^{2}=\id$, one easily verifies
that $\gamma_y(x)=\sigma^{-1}_{\sigma_x(y)}(x)$, for all $x,y\in
X$ (see for example \cite[Proposition~1.6]{ESS}).

\bigskip
\noindent {\bf Convention.} Throughout the paper a solution of the
YBE will mean an involutive, non-degenerate, set-theoretic
solution of the Yang--Baxter equation. \\

It is well known, see for example \cite[Proposition
8.2]{CedoSurvey}, that a map $r(x,y) = (\sigma_x (y),
\sigma^{-1}_{\sigma_x(y)}(x))$, defined for a collection of
bijections $\sigma_x, x\in X$, of the set $X$, is a solution of
the YBE if and only if
$\sigma_{x}\sigma_{\sigma_{x}^{-1}(y)}=\sigma_{y}\sigma_{\sigma_{y}^{-1}(x)}$
for all $x,y\in X$ and the maps $\gamma_y\colon X\rightarrow X$
defined by $\gamma_y(x)=\sigma^{-1}_{\sigma_x(y)}(x)$, for all
$x,y\in X$, are bijective. Furthermore, if $X$ is finite, then
$(X,r)$ is a solution of the YBE if and only if
$\sigma_{x}\sigma_{\sigma_{x}^{-1}(y)}=\sigma_{y}\sigma_{\sigma_{y}^{-1}(x)}$
    for all $x,y\in X$.

A left brace is a set $B$ with two binary operations, $+$ and
$\circ$, such that $(B,+)$ is an abelian group (the additive group
of $B$), $(B,\circ)$ is a group (the multiplicative group of $B$),
and for every $a,b,c\in B$,
 \begin{eqnarray} \label{braceeq}
  a\circ (b+c)+a&=&a\circ b+a\circ c.
 \end{eqnarray}
In any left brace $B$  the neutral elements $0,1$ for the
operations $+$ and $\circ$ coincide. Moreover, there is an action
$\lambda\colon (B,\circ)\longrightarrow \aut(B,+)$, called the
lambda map of $B$, defined by $\lambda(a)=\lambda_a$ and
$\lambda_{a}(b)=-a+a\circ b$, for $a,b\in B$. We shall write
$a\circ b=ab$ and $a^{-1}$ will denote the inverse of $a$ for the
operation $\circ$, for all $a,b\in B$. A trivial brace is a left
brace $B$ such that $ab=a+b$, for all $a,b\in B$, i.e. all
$\lambda_a=\id$. The socle of a left brace $B$ is
$$\soc(B)=\{ a\in B\mid ab=a+b, \mbox{ for all
}b\in B \}.$$ Note that $\soc(B)=\ker(\lambda)$, and thus it is a
normal subgroup of the multiplicative group of $B$. The solution of
the YBE associated to a left brace $B$ is $(B,r_B)$, where
$r_B(a,b)=(\lambda_a(b),\lambda_{\lambda_a(b)}^{-1}(a))$, for all
$a,b\in B$ (see \cite[Lemma~2]{CJOComm}).

A left ideal of a left brace $B$ is a subgroup $L$ of the additive
group of $B$ such that $\lambda_a(b)\in L$, for all $b\in L$ and
all $a\in B$. An ideal of a left brace $B$ is a normal subgroup
$I$ of the multiplicative group of $B$ such that $\lambda_a(b)\in
I$, for all $b\in I$ and all $a\in B$. Note that
\begin{eqnarray}\label{addmult1}
ab^{-1}&=&a-\lambda_{ab^{-1}}(b)
\end{eqnarray}
 for all $a,b\in B$, and
    \begin{eqnarray} \label{addmult2}
     &&a-b=a+\lambda_{b}(b^{-1})= a\lambda_{a^{-1}}(\lambda_b(b^{-1}))= a\lambda_{a^{-1}b}(b^{-1}),
     \end{eqnarray}
for all $a,b\in B$. Hence, every left ideal $L$ of $B$ also is a
subgroup of the multiplicative group of $B$, and every  ideal $I$
of a left brace $B$ also is a subgroup of the additive group of
$B$. For example,  every Sylow subgroup of the additive group of a
finite left brace $B$ is a left ideal of $B$. Consequently, if a
Sylow subgroup of the multiplicative group $(B,\circ)$ is normal,
then it must be an ideal of the finite left brace $B$. It is also
known that $\soc(B)$ is an ideal of the left brace $B$ (see
\cite[Proposition~7]{R07}). Note that, for every ideal $I$ of $B$,
$B/I$ inherits a natural left brace structure. A nonzero left
brace $B$ is simple if $\{0\}$ and $B$ are the only ideals of $B$.

A homomorphism of left braces is a map $f\colon B_1\longrightarrow
B_2$, where $B_1,B_2$ are left braces, such that $f(a b)=f(a)
f(b)$ and $f(a+b)=f(a)+f(b)$, for all $a,b\in B_1$. Note that the
kernel $\ker(f)$ of a homomorphism of left braces $f\colon
B_1\longrightarrow B_2$ is an ideal of $B_1$.

Recall that if $(X,r)$ is a solution of the YBE, with
$r(x,y)=(\sigma_x(y),\gamma_y(x))$, then its structure group
$G(X,r)=\gr(x\in X\mid xy=\sigma_x(y)\gamma_y(x)$ for all $x,y\in
X)$ has a natural structure of a left brace such that
$\lambda_x(y)=\sigma_x(y)$, for all $x,y\in X$. The additive group
of $G(X,r)$ is the free abelian group with basis $X$. The
permutation group $\mathcal{G}(X,r)=\gr(\sigma_x\mid x\in X)$ of
$(X,r)$ is a subgroup of the symmetric group $\Sym_X$ on $X$.  The
map $x\mapsto \sigma_x$, from $X$ to $\mathcal{G}(X,r)$ extends to
a group homomorphism $\phi: G(X,r)\longrightarrow
\mathcal{G}(X,r)$ and $\ker(\phi)=\soc(G(X,r))$. Hence there is a
unique structure of a left brace on $\mathcal{G}(X,r)$ such that
$\phi$ is a homomorphism of left braces; this is the natural
structure of a left brace on $\mathcal{G}(X,r)$. In particular,
\begin{equation*}
    \sigma_{\sigma_x(y)}=\phi(\sigma_x(y))=\phi(\lambda_x(y))=\lambda_{\phi(x)}(\phi(y))=\lambda_{\sigma_x}(\sigma_y)
\end{equation*}
for all $x,y\in X$. An easy consequence is \cite[Lemma
2.1]{CJOprimit}, which says that
\begin{equation}\label{Lemma 2.1}
    \lambda_{g}(\sigma_y)=\sigma_{g(y)}
\end{equation}
for all $g\in\mathcal{G}(X,r)$ and all $y\in X$.

Let $(X,r)$ and $(Y,s)$ be solutions of the YBE. We write
$r(x,y)=(\sigma_x(y),\gamma_y(x))$ and
$s(t,z)=(\sigma'_t(z),\gamma'_z(t))$, for all $x,y\in X$ and $t,z\in
Y$. A homomorphism of solutions $f\colon (X,r)\longrightarrow (Y,s)$
is a map $f\colon X\longrightarrow Y$ such that
$f(\sigma_x(y))=\sigma'_{f(x)}(f(y))$ and
$f(\gamma_y(x))=\gamma'_{f(y)}(f(x))$, for all $x,y\in X$. Since
$\gamma_y(x)=\sigma^{-1}_{\sigma_x(y)}(x)$ and
$\gamma'_z(t)=(\sigma')^{-1}_{\sigma'_t(z)}(t)$, it is clear that
$f$ is a homomorphism of solutions if and only if
$f(\sigma_x(y))=\sigma'_{f(x)}(f(y))$, for all $x,y\in X$.

Note  that, by de defining relations of the structure group, every
homomorphism of solutions $f\colon (X,r)\longrightarrow (Y,s)$
extends to a unique homomorphism of groups $f\colon
G(X,r)\longrightarrow G(Y,s)$ that we also denote by $f$, and  it
is easily checked that it also is a homomorphism of left braces.
If $f$ is surjective, then $f(\soc(G(X,r)))\subseteq \soc(G(Y,s))$
and thus $f$ induces a homomorphism of left braces $\bar f\colon
\mathcal{G}(X,r)\longrightarrow\mathcal{G}(Y,s)$.

In \cite{ESS}, Etingof, Schedler and Soloviev introduced the
retract relation on solutions  $(X,r)$ of the YBE. This is the
binary relation $\sim$ on $X$ defined by $x\sim y$ if and only if
$\sigma_x=\sigma_y$. Then, $\sim$ is an equivalence relation and
$r$ induces a solution $\overline{r}$ on the set
$\overline{X}=X/{\sim}$. The retract of the solution $(X,r)$ is
$\Ret(X,r)=(\overline{X},\overline{r})$. Note that the natural map
$f\colon X\longrightarrow \overline{X}:x\mapsto \bar x$ is an
epimorphism of solutions from $(X,r)$ onto $\Ret(X,r)$.

Recall that a solution $(X,r)$ is  said to be irretractable if
$\sigma_x\neq \sigma_y$ for all distinct elements $x,y\in X$, that
is $(X,r)=\Ret(X,r)$; otherwise the solution $(X,r)$ is
retractable.  Define $\Ret^1(X,r)=\Ret(X,r)$ and, for every
integer $n>1$, $\Ret^{n}(X,r)=\Ret(\Ret^{n-1}(X,r))$. A solution
$(X,r)$ is said to be a multipermutation solution if there exists
a positive integer $n$ such that $\Ret^n(X,r)$ has cardinality
$1$.

Let $(X,r)$ be a solution of the YBE. We say that $(X,r)$ is
indecomposable if $\mathcal{G}(X,r)$ acts transitively on $X$.
We write $r(x,y)=(\sigma_x(y),\gamma_y(x))$.

\begin{definition}
    A solution $(X,r)$ of the YBE is simple if $|X|>1$ and for every
    epimorphism $f:(X,r) \longrightarrow (Y,s)$ of solutions either $f$
    is an isomorphism or $|Y|=1$.
\end{definition}

In this context, the following result (Proposition~4.1 in
\cite{CO21} and Lemma~3.4 in \cite{CO22}) is crucial.

\begin{lemma}  \label{simple indec}
Assume that $(X,r)$ is a simple solution of the YBE. Then it is
indecomposable if $|X|>2$ and it is irretractable if $|X|$ is not
a prime number.
\end{lemma}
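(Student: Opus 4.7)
My plan is to derive both statements by the same route: assume the relevant property fails, construct from the failure a solution epimorphism $f\colon(X,r)\twoheadrightarrow(Y,s)$ with $1<|Y|<|X|$, and invoke simplicity of $(X,r)$ for a contradiction. The construction of the quotient differs between the two halves but the overall bookkeeping is parallel.

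For the first claim, suppose $(X,r)$ is not indecomposable. Take $Y=X/\mathcal{G}(X,r)$, the set of $\mathcal{G}(X,r)$-orbits, with the orbit map $f\colon X\to Y$. Since both $\sigma_x(y)$ and $\gamma_y(x)=\sigma_{\sigma_x(y)}^{-1}(x)$ lie respectively in the orbits of $y$ and $x$, the flip map $s([x],[y])=([y],[x])$ makes $(Y,s)$ a solution and turns $f$ into an epimorphism. Simplicity then forces either $|Y|=1$, contradicting decomposability, or $f$ bijective; in the latter case every orbit is a singleton, so each $\sigma_x=\id$, and $r$ itself is the flip on $X$. But the flip admits every equivalence relation on $X$ as a quotient solution, and when $|X|>2$ one can simply identify two points to obtain a quotient of size $|X|-1\ge 2$, again contradicting simplicity.

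For the second claim, suppose $(X,r)$ is retractable. The canonical epimorphism $(X,r)\to\Ret(X,r)$ is not injective, so simplicity forces $|\Ret(X,r)|=1$, i.e.\ $\sigma_x=\sigma$ for a fixed $\sigma\in\Sym_X$ independent of $x$, and hence $r(x,y)=(\sigma(y),\sigma^{-1}(x))$. Any $\sigma$-invariant partition of $X$ then yields a quotient solution of the same form, so the problem reduces to the combinatorial statement that every $\sigma\in\Sym_X$ admits a nontrivial proper invariant partition when $n=|X|$ is composite. If $\sigma$ consists of at least two disjoint cycles (counting fixed points as $1$-cycles), lump some cycles into one block and the remaining ones into a second block to get $|Y|=2$. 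If $\sigma$ is a single $n$-cycle, pick a proper divisor $d\mid n$ with $1<d<n$ and use the canonical cyclic partition into $d$ blocks of size $n/d$, giving $|Y|=d$. Either way $1<|Y|<|X|$.

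The main obstacle is precisely this last combinatorial step, because a prime $n$ obstructs the construction when $\sigma$ is a single $n$-cycle (its only invariant partitions are the two trivial ones). That is exactly where the hypothesis ``$|X|$ not prime'' enters, and is essential; everything else in the proof is formal manipulation of the orbit and retract quotients.
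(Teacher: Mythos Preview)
Your argument is correct and follows the standard route. The paper itself does not prove this lemma but only cites \cite[Proposition~4.1]{CO21} and \cite[Lemma~3.4]{CO22}, where the proofs proceed along exactly the lines you give: pass to the orbit quotient (with the trivial solution on the quotient) for indecomposability, and pass to the retract, reduce to a permutation solution $r(x,y)=(\sigma(y),\sigma^{-1}(x))$, and exhibit a proper $\sigma$-invariant partition for irretractability.

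One small omission worth patching: in the second claim you write ``when $n=|X|$ is composite'' and your single-cycle case tacitly assumes $n$ finite. The lemma as stated also covers infinite $X$ (an infinite cardinal is not a prime number). If $|X|$ is uncountable then every $\sigma$-orbit is countable, so $\sigma$ has at least two orbits and your first case applies; if $|X|=\aleph_0$ and $\sigma$ has a single orbit then $(X,\sigma)\cong(\Z,\,k\mapsto k+1)$ and the parity partition gives $|Y|=2$. With that addition the proof is complete in full generality.
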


\section{New simple solutions of cardinality $n^2$}\label{section5}

In this section we start with a generalization of \cite[Theorem
4.7]{CO21}.

\begin{theorem}\label{newsol}
    Let $A$ be an abelian group. Let $t\in\Aut(A)$. Let $(j_a)_{a\in A}$ be a family of elements of $A$ such that $j_a=j_{-a}$ and
    \begin{equation}\label{jt}
        j_{t^s(a)}-j_0=t^{s}(j_a-j_0),
    \end{equation}
    for all $a\in A$ and $s\in \Z$. Let $r\colon A^2\times A^2\longrightarrow A^2\times A^2$ be the map defined by
    $$r((a_1,a_2),(c_1,c_2))=(\sigma_{(a_1,a_2)}(c_1,c_2),\sigma^{-1}_{\sigma_{(a_1,a_2)}(c_1,c_2)}(a_1,a_2)),$$
    where $\sigma_{(a_1,a_2)}(c_1,c_2)=(t(c_1)+a_2,t(c_2-j_{t(c_1)+a_2-a_1}))$, for all $a_1,a_2,c_1,c_2\in A$.
    Then $(A^2,r)$ is a solution of the YBE.
\end{theorem}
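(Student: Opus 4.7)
The plan is to apply the standard criterion recalled in the preliminaries: the map $r$ is a solution of the YBE if and only if each $\sigma_{(a_1,a_2)}$ is a bijection of $A^2$, the identity
$$\sigma_x\,\sigma_{\sigma_x^{-1}(y)}=\sigma_y\,\sigma_{\sigma_y^{-1}(x)}$$
holds for all $x,y\in A^2$, and each $\gamma_y$ is a bijection. So I split the proof into these three verifications.

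First I would invert $\sigma_{(a_1,a_2)}$ directly. From $t(c_1)+a_2=b_1$ one gets $c_1=t^{-1}(b_1-a_2)$, and then $t(c_2-j_{b_1-a_1})=b_2$ yields $c_2=t^{-1}(b_2)+j_{b_1-a_1}$, so
$$\sigma_{(a_1,a_2)}^{-1}(b_1,b_2)=\bigl(t^{-1}(b_1-a_2),\;t^{-1}(b_2)+j_{b_1-a_1}\bigr),$$
which is visibly a bijection because $t\in\Aut(A)$.

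The heart of the argument is verifying the YBE identity. Setting $x=(a_1,a_2)$, $y=(c_1,c_2)$, and evaluating each side at an arbitrary $(p,q)\in A^2$, a patient computation produces two pairs of the form $\bigl(t^2(p)+\cdots,\;t^2(q)-\cdots\bigr)$. Equality of the first components reduces precisely to $j_{c_1-a_1}=j_{a_1-c_1}$, which is the symmetry hypothesis $j_a=j_{-a}$. Equality of the second components reduces, after isolating the terms that already match, to an identity of the form
$$t^2(j_\alpha)+t(j_{t(\beta)})=t^2(j_\beta)+t(j_{t(\alpha)}),$$
where $\alpha$ and $\beta$ are explicit elements of $A$ that, after bookkeeping, satisfy $t(\alpha)-t(\beta)=a_1-c_1$. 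Applying the consequence $j_{t(a)}=t(j_a)+j_0-t(j_0)$ of hypothesis (\ref{jt}) with $s=1$, one finds $t(j_{t(\beta)})=t^2(j_\beta)+t(j_0)-t^2(j_0)$ and symmetrically for $t(j_{t(\alpha)})$; substitution makes both sides collapse to $t^2(j_\alpha)+t^2(j_\beta)+t(j_0)-t^2(j_0)$.

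Finally, bijectivity of each $\gamma_y$ is immediate in the finite case (as recorded in the preliminaries), and in general is obtained from the explicit formula $\gamma_{(c_1,c_2)}(a_1,a_2)=\sigma^{-1}_{\sigma_{(a_1,a_2)}(c_1,c_2)}(a_1,a_2)$ together with the inversion formula for $\sigma$ derived above, which allows one to write $\gamma_y^{-1}$ in closed form. The only real obstacle is the second step: the bookkeeping needed to align the nested arguments of $j$ under repeated application of $t$. The hypothesis (\ref{jt}) is tailored so that exactly these $t$-twisted $j$-terms cancel in pairs.
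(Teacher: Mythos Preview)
Your steps for the inverse of $\sigma$ and for the identity $\sigma_x\sigma_{\sigma_x^{-1}(y)}=\sigma_y\sigma_{\sigma_y^{-1}(x)}$ match the paper's proof closely; in particular, your observation that the second components reduce to $t^2(j_\alpha)+t(j_{t(\beta)})=t^2(j_\beta)+t(j_{t(\alpha)})$ with $\beta_L=t(\alpha_R)$ and $\beta_R=t(\alpha_L)$ is exactly what the paper's double use of (\ref{jt}) encodes. (The relation $t(\alpha)-t(\beta)=a_1-c_1$ you mention is true but not actually needed for the cancellation.)

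Where you diverge is in the bijectivity of $\gamma_y$ for infinite $A$. The paper does not attempt to invert $\gamma_{(c_1,c_2)}$ directly; instead it introduces the ``diagonal'' map $T(a_1,a_2)=\sigma_{(a_1,a_2)}^{-1}(a_1,a_2)=(t^{-1}(a_1-a_2),\,t^{-1}(a_2)+j_0)$ and checks, from the already-proved identity, that $T\circ\gamma_{(a_1,a_2)}=\sigma_{(a_1,a_2)}^{-1}\circ T$. Since $T$ is visibly bijective, so is each $\gamma_{(a_1,a_2)}$. Your proposed direct inversion also works, but it is not as automatic as you suggest: the formula $\gamma_{(c_1,c_2)}(a_1,a_2)=\bigl(t^{-1}(a_1)-c_2+j_{t(c_1)+a_2-a_1},\,t^{-1}(a_2)+j_{a_1-t(c_1)-a_2}\bigr)$ has the unknowns $a_1,a_2$ tangled inside the $j$-subscript, and one must first notice that subtracting the two components recovers $t^{-1}(a_1-a_2)$, hence the subscript, hence $a_1$ and $a_2$ individually. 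So your route is correct but hides a small trick, whereas the paper's $T$-argument trades that trick for a one-line conjugation identity.
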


\begin{proof}
    Note that $\sigma_{(a_1,a_2)}\in \Sym_{A^2}$ and
    \begin{equation}\label{sigmainverse}
    \sigma^{-1}_{(a_1,a_2)}(c_1,c_2)=(t^{-1}(c_1-a_2),t^{-1}(c_2)+j_{c_1-a_1}),
    \end{equation}
    for all $a_1,a_2,c_1,c_2\in A$. It is known from \cite[Proposition 2]{CJOComm} that $(A^2,r)$ is a solution of the YBE if and only if
    \begin{equation}\label{sigmacond}
        \sigma_{(a_1,a_2)}\sigma_{\sigma^{-1}_{(a_1,a_2)}(c_1,c_2)}=\sigma_{(c_1,c_2)}\sigma_{\sigma^{-1}_{(c_1,c_2)}(a_1,a_2)},
    \end{equation}
    for all $a_1,a_2,c_1,c_2\in A$, and for every $a_1,a_2\in A$, the map $\gamma_{(a_1,a_2)}\colon A\longrightarrow A$ defined by
    $\gamma_{(a_1,a_2)}(c_1,c_2)=\sigma^{-1}_{\sigma_{(c_1,c_2)}(a_1,a_2)}(c_1,c_2)$, for all $c_1,c_2\in A$, is bijective. We have that
    \begin{eqnarray*}
        \lefteqn{\sigma_{(a_1,a_2)}\sigma_{\sigma^{-1}_{(a_1,a_2)}(c_1,c_2)}(x,y)}\\
        &=&\sigma_{(a_1,a_2)}\sigma_{(t^{-1}(c_1-a_2),t^{-1}(c_2)+j_{c_1-a_1})}(x,y)\\
        &=&\sigma_{(a_1,a_2)}(t(x)+t^{-1}(c_2)+j_{c_1-a_1}, t(y-j_{t(x)+t^{-1}(c_2)+j_{c_1-a_1}-t^{-1}(c_1-a_2)}))\\
        &=&(t(t(x)+t^{-1}(c_2)+j_{c_1-a_1})+a_2,\\
        &&\qquad t(t(y-j_{t(x)+t^{-1}(c_2)+j_{c_1-a_1}-t^{-1}(c_1-a_2)})-j_{t(t(x)+t^{-1}(c_2)+j_{c_1-a_1})+a_2-a_1}))\\
        &=&(t^2(x)+c_2+t(j_{c_1-a_1})+a_2,\\
        &&\qquad t^2(y)-t^2(j_{t(x)+t^{-1}(c_2)+j_{c_1-a_1}-t^{-1}(c_1-a_2)})-t(j_{t^2(x)+c_2+t(j_{c_1-a_1})+a_2-a_1}))
    \end{eqnarray*}
    and similarly
    \begin{eqnarray*}
        \lefteqn{\sigma_{(c_1,c_2)}\sigma_{\sigma^{-1}_{(c_1,c_2)}(a_1,a_2)}(x,y)}\\
        &=&(t^2(x)+a_2+t(j_{a_1-c_1})+c_2,\\
        &&\qquad t^2(y)-t^2(j_{t(x)+t^{-1}(a_2)+j_{a_1-c_1}-t^{-1}(a_1-c_2)})-t(j_{t^2(x)+a_2+t(j_{a_1-c_1})+c_2-c_1})),
    \end{eqnarray*}
    for all $a_1,a_2,c_1,c_2,x,y\in A$. Since $j_a=j_{-a}$, we have that
    $$t^2(x)+a_2+t(j_{a_1-c_1})+c_2=t^2(x)+c_2+t(j_{c_1-a_1})+a_2$$
    and using (\ref{jt})  (twice, in order to get the first equality) we get
    \begin{eqnarray*}
    \lefteqn{t^2(y)-t^2(j_{t(x)+t^{-1}(a_2)+j_{a_1-c_1}-t^{-1}(a_1-c_2)})-t(j_{t^2(x)+a_2+t(j_{a_1-c_1})+c_2-c_1})}\\
    &=&t^2(y)-t^2(j_0)+t(j_0)-t(j_{t^2(x)+a_2+t(j_{c_1-a_1})-a_1+c_2})\\
    &&\quad -t(j_0)+t^2(j_0)-t^2(j_{t(x)+t^{-1}(a_2)+j_{c_1-a_1}+t^{-1}(c_2)-t^{-1}(c_1)})\\
    &=&t^2(y)-t^2(j_{t(x)+t^{-1}(c_2)+j_{c_1-a_1}-t^{-1}(c_1-a_2)})-t(j_{t^2(x)+c_2+t(j_{c_1-a_1})+a_2-a_1}),
    \end{eqnarray*}
     for all $a_1,a_2,c_1,c_2,x,y\in A$. Hence (\ref{sigmacond}) is satisfied.
     Let $T\colon A\longrightarrow A$ be the map defined by $T(a_1,a_2)=\sigma^{-1}_{(a_1,a_2)}(a_1,a_2)$, for all $a_1,a_2\in A$.
     Note that by the inverse of (\ref{sigmacond}) applied to the pair
     of elements $\sigma_{(c_1,c_2)}(a_1,a_2), (c_1,c_2)$, and by the definition of
     $\gamma_{(a_1,a_2)}$,
     \begin{align*}
        T(\gamma_{(a_1,a_2)}(c_1,c_2))
        =&\sigma^{-1}_{\sigma^{-1}_{\sigma_{(c_1,c_2)}(a_1,a_2)}(c_1,c_2)}(\sigma^{-1}_{\sigma_{(c_1,c_2)}(a_1,a_2)}(c_1,c_2))\\
        =&\sigma^{-1}_{\sigma^{-1}_{(c_1,c_2)}(\sigma_{(c_1,c_2)}(a_1,a_2))}(\sigma^{-1}_{(c_1,c_2)}(c_1,c_2))\\
        =&\sigma^{-1}_{(a_1,a_2)}(T(c_1,c_2)),
     \end{align*}
     for all $a_1,a_2,c_1,c_2$. Hence to prove that $\gamma_{(a_1,a_2)}$ is bijective it is enough to prove that $T$ is bijective.
     Since $T(a_1,a_2)=(t^{-1}(a_1-a_2), t^{-1}(a_2)+j_0)$, it is easy to see that $T$ is bijective and
     $$T^{-1}(a_1,a_2)=(t(a_1+a_2-j_0),t(a_2-j_0)),$$
     for all $a_1,a_2\in A$. Therefore $(A^2,r)$ is a solution of the YBE.
\end{proof}

\begin{proposition}\label{newindecomposable}
    With the same notation and hypothesis as in Theorem \ref{newsol}.
    Let $W=\gr(j_a-j_c\mid a,c\in A)$. Then the orbit of $(0,0)$ by the action of
    $\mathcal{G}(A^2,r)$ is $A\times C$, where
    \begin{align*}C=&\{ w+j_0+t^{-1}(j_0)+\dots +t^{-n}(j_0)\mid w\in W \mbox{ and }n\mbox{ is a non-negative integer} \}\\
        &\cup
    \{ w-t(j_0)-\dots -t^{n}(j_0)\mid w\in W \mbox{ and }n\mbox{ is a non-negative integer} \}.
    \end{align*}
    In particular, if $W=A$, then the solution $(A^2,r)$ of the YBE is indecomposable.
\end{proposition}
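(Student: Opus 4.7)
The plan is to first establish that the orbit $O$ of $(0,0)$ has the form $A\times C$ for some subset $C\subseteq A$, and then to compute $C$ explicitly. For the first step, I would observe that applying $\sigma^{-1}_{(a_1,a_2')}\sigma_{(a_1,a_2)}$, with the same first parameter $a_1$ in both factors, to a point $(b_1,c)$ yields $(b_1+t^{-1}(a_2-a_2'),c)$: a direct computation from the definition of $\sigma$ and (\ref{sigmainverse}) shows that the $j_u$ appearing in the forward step cancels in the backward step, because $u=t(b_1)+a_2-a_1$ coincides with the argument $b_1'-a_1$ of $j$ in the subsequent inverse application (where $b_1'=t(b_1)+a_2$). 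Hence from any $(b_1,c)\in O$ one can reach $(b,c)$ for any prescribed $b\in A$, so $O=A\times C$ where $C$ is the projection of $O$ onto the second factor.

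Next I would record the closure properties of $C$. Since $\sigma_{(a_1,a_2)}(b_1,c)$ has second coordinate $t(c)-t(j_u)$ for a freely chosen $u\in A$ (controlled by $a_1$), and $\sigma^{-1}_{(a_1,a_2)}(b_1,c)$ has second coordinate $t^{-1}(c)+j_v$ for a freely chosen $v\in A$, the set $C$ contains $0$ and is stable under all operations $\varphi_u\colon c\mapsto t(c)-t(j_u)$ and $\psi_u\colon c\mapsto t^{-1}(c)+j_u$ with $u\in A$. The composition $\psi_{u'}\varphi_u$ is translation by $j_{u'}-j_u$; varying $u,u'$ these generate all translations by $W$, so $C+W=C$ and, since $0\in C$, also $W\subseteq C$. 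From (\ref{jt}) one obtains $t(j_a-j_c)=j_{t(a)}-j_{t(c)}\in W$, hence $t(W)=W$ and $t$ descends to an automorphism of $A/W$.

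Passing to $A/W$, each $\varphi_u$ and $\psi_u$ becomes independent of $u$ (since $j_u\equiv j_0\pmod W$) and reduces to a single pair of mutually inverse maps $\bar c\mapsto t(\bar c)-\overline{t(j_0)}$ and $\bar c\mapsto t^{-1}(\bar c)+\overline{j_0}$. Iterating these from $\bar 0$ produces exactly the cosets represented by the partial sums $j_0+t^{-1}(j_0)+\cdots+t^{-n}(j_0)$ and $-t(j_0)-\cdots-t^n(j_0)$ for $n\ge 0$ (with $n=0$ read as the empty sum, yielding $\bar 0$). Lifting back to $A$ and taking all $W$-translates of these representatives yields the two sets in the statement, so $C$ has the claimed form. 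The last assertion is then immediate: if $W=A$, then $A/W$ is trivial, the orbit in $A/W$ collapses to $\bar 0$, and $C=A$, so $O=A^2$, i.e.\ $(A^2,r)$ is indecomposable. The only delicate ingredient is the clean descent of the operations modulo $W$, which rests entirely on the identity $t(W)=W$; the remaining steps amount to direct verification.
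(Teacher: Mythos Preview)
Your argument is correct and takes a genuinely different route from the paper's proof. The paper proceeds by direct verification of both inclusions: it first checks, by explicit computation on each of the four cases $\sigma_{(a_1,a_2)}^{\pm 1}$ applied to elements of the two families in $C$, that $A\times C$ is $\mathcal{G}(A^2,r)$-invariant (giving $O\subseteq A\times C$); then it builds $A\times C$ inside $O$ by an induction, first showing $(0,w)\in O$ for all $w\in W$, next $(c,w)\in O$ for all $c\in A$ and $w\in W$ via $\sigma_{(0,a)}\sigma^{-1}_{(0,c)}$, and finally extending to all of $A\times C$ using (\ref{orbit00}) and (\ref{orbit002}).

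Your approach is more structural. The cancellation identity $\sigma^{-1}_{(a_1,a_2')}\sigma_{(a_1,a_2)}(b_1,c)=(b_1+t^{-1}(a_2-a_2'),c)$ is a clean observation that immediately forces $O=A\times C$ for \emph{some} set $C$, bypassing the need to guess and then verify invariance of a candidate set. Reducing the determination of $C$ to a single cyclic orbit in $A/W$ (via $t(W)=W$ and $j_u\equiv j_0\bmod W$) is also neater than the paper's inductive buildup. One point you leave implicit is the inclusion $C\subseteq\text{(claimed set)}$: you record that $C$ contains $0$ and is closed under all $\varphi_u,\psi_u$, which gives $\supseteq$; the reverse inclusion holds because every element of $O$ is reached from $(0,0)$ by a word in the $\sigma^{\pm 1}$, so every element of $C$ is reached from $0$ by a word in the $\varphi_u,\psi_u$, and each such word descends to a power of $\bar\varphi$ in $A/W$. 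This is routine but worth stating. What the paper's approach buys is that it never needs to argue abstractly that the second-coordinate dynamics decouples from the first; what yours buys is a considerably shorter and more conceptual computation of $C$ once that decoupling is established.
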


\begin{proof}
     By (\ref{jt}), we have that $t^s(j_a-j_c)=j_{t^s(a)}-j_{t^s(c)}$, for all $a,c\in A$
     and all $s\in\Z$. Hence $t^s(W)=W$, for all $s\in\Z$.
     Let $n$ be a non-negative integer. First we prove that $g(A\times C)=A\times C$,
     for all $g\in\mathcal{G}(A^2,r)$. In fact, we have that
    \begin{eqnarray}
        \label{orbit00}\lefteqn{\sigma_{(a_1,a_2)}(c,w+j_0+t^{-1}(j_0)+\dots+t^{-n}(j_0))}\\
        &=&(t(c)+a_2, t(w+j_0+t^{-1}(j_0)+\dots+t^{-n}(j_0)-j_{t(c)+a_2-a_1}))\notag\\
        &=&(t(c)+a_2,t(w+j_0-j_{t(c)+a_2-a_1})+j_0+\dots +t^{-n+1}(j_0))\in A\times C,\notag
    \end{eqnarray}
    \begin{eqnarray*}\lefteqn{\sigma^{-1}_{(a_1,a_2)}(c,w+j_0+t^{-1}(j_0)+\dots+t^{-n}(j_0))}\\
        &=&(t^{-1}(c-a_2),t^{-1}(w+j_0+t^{-1}(j_0)+\dots+t^{-n}(j_0))+j_{c-a_1})\\
        &=&(t^{-1}(c-a_2),t^{-1}(w)+t^{-1}(j_0)+\dots+t^{-n-1}(j_0)+j_{c-a_1}-j_0+j_0)\in A\times C,
    \end{eqnarray*}
    \begin{eqnarray}\label{orbit002}\lefteqn{\sigma_{(a_1,a_2)}(c,w-t(j_0)-\dots -t^{n}(j_0))}\\
        &=&(t(c)+a_2, t(w-t(j_0)-\dots -t^{n}(j_0)-j_{t(c)+a_2-a_1}))\notag\\
        &=&(t(c)+a_2,t(w+j_0-j_{t(c)+a_2-a_1})-t(j_0)-\dots -t^{n+1}(j_0))\in A\times C\notag
    \end{eqnarray}
    and
    \begin{eqnarray*}\lefteqn{\sigma^{-1}_{(a_1,a_2)}(c,w-t(j_0)-\dots -t^{n}(j_0))}\\
        &=&(t^{-1}(c-a_2),t^{-1}(w-t(j_0)-\dots -t^{n}(j_0))+j_{c-a_1})\\
        &=&(t^{-1}(c-a_2),t^{-1}(w)-t(j_0)-\dots -t^{n-1}(j_0)+j_{c-a_1}-j_0)\in A\times C,
    \end{eqnarray*}
    for all $a_1,a_2,c\in A$ and all $w\in W$. Hence $g(A\times C)=A\times C$, for all
    $g\in \mathcal{G}(A^2,r)$.

    Let $w\in W$. There exist  a positive integer $k$ and $a_{i},c_{i}\in A$, for $0\leq i\leq k$, such that
    $$w=\sum_{i=0}^k(j_{a_i}-j_{c_i})$$
    We shall prove that $(0,w)$ is in the orbit of $(0,0)$ by induction on $k$. For $k=0$, $w=j_{a_0}-j_{c_0}$.
    We may assume that $j_{a_0}\neq j_{c_0}$. Note that
    $$\sigma^{-1}_{(t^{-1}(a_0),0)}(0,0)=(0,j_{-t^{-1}(a_0)})=(0,j_{t^{-1}(a_0)}),$$
    and by (\ref{jt}),
    $$\sigma_{(t^{-1}(c_0),0)}(0,j_{t^{-1}(a_0)})=(0,t(j_{t^{-1}(a_0)}-j_{-t^{-1}(c_0)}))=(0,j_{a_0}-j_{c_0}).$$
    Hence $(0,w)$ is in the orbit of $(0,0)$ for $k=0$. Suppose that $k>0$ and that $(0,\sum_{i=0}^{k-1}(j_{a_i}-j_{c_i}))$
    is in the orbit of $(0,0)$. We have that
        \begin{align*}\sigma^{-1}_{(t^{-1}(a_k),0)}(0,\sum_{i=0}^{k-1}(j_{a_i}-j_{c_i}))
            =&(0,t^{-1}(\sum_{i=0}^{k-1}(j_{a_i}-j_{c_i}))+j_{-t^{-1}(a_k)})\\
            =&(0,t^{-1}(\sum_{i=0}^{k-1}(j_{a_i}-j_{c_i}))+j_{t^{-1}(a_k)}),
            \end{align*}
    and by (\ref{jt}),
    \begin{eqnarray*}\lefteqn{\sigma_{(t^{-1}(c_k),0)}(0,t^{-1}(\sum_{i=0}^{k-1}(j_{a_i}-j_{c_i}))+j_{t^{-1}(a_k)})}\\
        &=&(0,\sum_{i=0}^{k-1}(j_{a_i}-j_{c_i})+t(j_{t^{-1}(a_k)}-j_{-t^{-1}(c_k)}))\\
        &=&(0,\sum_{i=0}^{k-1}(j_{a_i}-j_{c_i})+j_{a_k}-j_{c_k})=(0,w).
    \end{eqnarray*}
    Hence $(0,w)$ is in the orbit of $(0,0)$. Therefore, by induction, $(0,w)$ is in the orbit of $(0,0)$ for all $w\in W$.
    Since
    $$\sigma_{(0,a)}\sigma^{-1}_{(0,c)}(0,w)=\sigma_{(0,a)}(-t^{-1}(c),t^{-1}(w)+j_{0})=(a-c,w+t(j_0-j_{-t^{-1}(c)})),$$
    for all $a,c\in A$ and all $w\in W$, we get that $(c,w)$ is in the orbit of $(0,0)$ for all $c\in A$ and all $w\in W$.
    By (\ref{orbit00}) and (\ref{orbit002}), one can prove by induction on $n$ that $(a,u)$ is in the orbit of $(0,0)$
    for all $a\in A$ and all $u\in C$. Hence the orbit of $(0,0)$ is $A\times C$ and the result follows.
\end{proof}

\begin{proposition}\label{newirretractable}
With the same notation and hypothesis as in Theorem \ref{newsol}.
The solution $(A^2,r)$ of the YBE is irretractable if and only if
for every nonzero $a\in A$ there exists $c\in A$ such that
$j_{a+c}\neq j_c$.
\end{proposition}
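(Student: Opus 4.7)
The plan is to reduce irretractability to an injectivity statement about the assignment $(a_1,a_2)\mapsto\sigma_{(a_1,a_2)}$, and then compute directly from the formula given in Theorem \ref{newsol}. By definition, $(A^2,r)$ is irretractable if and only if $\sigma_{(a_1,a_2)}=\sigma_{(b_1,b_2)}$ forces $(a_1,a_2)=(b_1,b_2)$. So I would fix two pairs $(a_1,a_2)$ and $(b_1,b_2)$ and compare the two bijections coordinatewise on an arbitrary $(c_1,c_2)\in A^2$.

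First I would look at the first coordinate of $\sigma_{(a_1,a_2)}(c_1,c_2)=(t(c_1)+a_2,\,t(c_2-j_{t(c_1)+a_2-a_1}))$. Equality of first coordinates for all $c_1$ immediately yields $a_2=b_2$. With $a_2=b_2$ in hand, equality of the second coordinates (and the fact that $t\in\Aut(A)$) becomes
\[
j_{t(c_1)+a_2-a_1}=j_{t(c_1)+a_2-b_1}\qquad\text{for all }c_1,c_2\in A.
\]
Since $t$ is bijective and $a_2$ is fixed, as $c_1$ ranges over $A$ the element $c:=t(c_1)+a_2-a_1$ ranges over all of $A$. Setting $a:=a_1-b_1$, the condition is equivalent to
\[
j_c=j_{c+a}\qquad\text{for all }c\in A.
\]

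Putting the two coordinate conditions together: $\sigma_{(a_1,a_2)}=\sigma_{(b_1,b_2)}$ if and only if $a_2=b_2$ and the difference $a=a_1-b_1$ satisfies $j_c=j_{c+a}$ for every $c\in A$. Consequently the map $(a_1,a_2)\mapsto\sigma_{(a_1,a_2)}$ fails to be injective precisely when some nonzero $a\in A$ satisfies $j_{c+a}=j_c$ for all $c$. Negating this yields the stated criterion: $(A^2,r)$ is irretractable if and only if for every nonzero $a\in A$ there exists $c\in A$ with $j_{a+c}\neq j_c$.

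The proof is essentially a direct unraveling of the definitions together with the explicit formula for $\sigma_{(a_1,a_2)}$; there is no genuine obstacle, only the bookkeeping to confirm that the parameter $c=t(c_1)+a_2-a_1$ sweeps out all of $A$ and that the hypothesis $t\in\Aut(A)$ may be cancelled on the outside. I would not need to invoke the symmetry $j_a=j_{-a}$ or the covariance relation (\ref{jt}) at any point, and the argument does not depend on $(A^2,r)$ being indecomposable.
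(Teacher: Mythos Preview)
Your proposal is correct and takes essentially the same approach as the paper: both compare $\sigma_{(a_1,a_2)}$ and $\sigma_{(b_1,b_2)}$ directly via the explicit formula, read off $a_2=b_2$ from the first coordinate, and reduce the second-coordinate condition to $j_c=j_{c+a}$ for all $c$. The paper is slightly terser (it evaluates at specific test points such as $(c,0)$ and exhibits the explicit pair $\sigma_{(0,0)}=\sigma_{(-a,0)}$ for the converse), but the underlying computation is identical.
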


\begin{proof}
    Suppose that $\sigma_{(a_1,a_2)}=\sigma_{(c_1,c_2)}$. Since
    $$\sigma_{(a_1,a_2)}(c,0)=(t(c)+a_2,-t(j_{t(c)+a_2-a_1})),$$
    we have that $a_2=c_2$ and $j_{t(c)+a_2-a_1}=j_{t(c)+a_2-c_1}$, for all $c\in A$.

    Conversely, suppose that there exist a nonzero $a\in A$ such that $j_{a+c}=j_c$ for all $c\in A$.
    Then
    $$\sigma_{(0,0)}(a_1,a_2)=(t(a_1),t(a_2-j_{t(a_1)}))=(t(a_1),t(a_2-j_{a+t(a_1)}))=\sigma_{(-a,0)}(a_1,a_2),$$
    for all $a_1,a_2\in A$. Hence $\sigma_{(0,0)}=\sigma_{(-a,0)}$ and the result follows.
\end{proof}

Consider the solution $(A^2,r)$ of the YBE described in Theorem
\ref{newsol}. Let $a\in A$ be a nonzero element. Let
$V_{a,1}=\gr(j_c-j_{c+t^z(a)}\mid c\in A,\; z\in\Z)$. For every
$i>1$, define $V_{a,i}=V_{a,i-1}+\gr(j_c-j_{c+v}\mid c\in A,\;
v\in V_{a,i-1})$. Let $V_a=\sum_{i=1}^{\infty}V_{a,i}$. Note that
$V_a=\bigcup_{i=1}^{\infty}V_{a,i}$ is a subgroup of $A$. By
(\ref{jt}),
$$t(j_c-j_{c+t^z(a)})=j_{t(c)}-j_{t(c)+t^{z+1}(a)}\in V_{a,1}$$
and
$$t^{-1}(j_c-j_{c+t^z(a)})=j_{t^{-1}(c)}-j_{t^{-1}(c)+t^{z-1}(a)}\in V_{a,1},$$
for all $c\in A$ and all $z\in\Z$. Hence $t(V_{a,1})=V_{a,1}$. By
induction on $i$, one can prove that $t(V_{a,i})=V_{a,i}$, for
every positive integer $i$. Hence $t(V_a)=V_a$.

\begin{theorem} \label{simple_necessary}
    With the same notation and hypothesis as in Theorem \ref{newsol}.
    Suppose that $A\neq\{ 0\}$. If the solution $(A^2,r)$ is simple, then $V_a=A$,
    for every nonzero element $a\in A$.
\end{theorem}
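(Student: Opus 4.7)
The plan is to argue by contrapositive: assume that some nonzero $a \in A$ has $V_a \neq A$, and construct a proper non-trivial quotient of $(A^2, r)$ to contradict simplicity. Set $M = V_a + \langle t^z(a) \mid z \in \Z\rangle$, the smallest subgroup of $A$ containing $V_a$ together with the $\langle t\rangle$-orbit of $a$. Since $V_a$ and $\langle t^z(a)\mid z\in\Z\rangle$ are both $t$-invariant in both directions, so is $M$, and $V_a \subseteq M$.

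The key technical step is the following lemma: if $b - b' \in M$, then $j_b - j_{b'} \in V_a$. To prove it, write $b - b' = \sum_{i=1}^k \varepsilon_i t^{z_i}(a) + v$ with $\varepsilon_i \in \{\pm 1\}$, $z_i \in \Z$ and $v \in V_a$, and interpolate intermediate elements so that consecutive differences are $\pm t^{z_i}(a)$ or $v$. Each step of the first type lies in $V_{a,1} \subseteq V_a$ (directly for $+t^{z_i}(a)$; for $-t^{z_i}(a)$ one reindexes via $j_{c - t^{z_i}(a)} - j_c = j_{c'} - j_{c'+t^{z_i}(a)}$ with $c' = c - t^{z_i}(a)$), while the step corresponding to $v$ lies in $V_a$ by the defining recursion of the $V_{a,i}$. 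Telescoping yields the claim.

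Define $(a_1, a_2) \sim (c_1, c_2)$ iff $a_1 - c_1 \in M$ and $a_2 - c_2 \in V_a$. I claim $\sim$ is a congruence of the solution, which reduces to showing that both $\sigma$ and $\sigma^{-1}$ respect it. For $\sigma_{(a_1,a_2)}(c_1,c_2) = (t(c_1) + a_2,\; t(c_2 - j_{t(c_1)+a_2-a_1}))$, the first-coordinate change is in $t(M) + V_a = M$ (using $V_a \subseteq M$), and the second-coordinate change is $t(c_2 - c_2') - t(j_u - j_{u'})$, where $u - u' = t(c_1 - c_1') + (a_2 - a_2') - (a_1 - a_1') \in M$; both summands lie in $V_a$, the second by the lemma. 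The verification for $\sigma^{-1}$ from formula (\ref{sigmainverse}) is entirely parallel.

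Consequently the canonical projection $(A^2, r) \to (A^2/{\sim}, \bar r)$ is an epimorphism of solutions of the YBE whose target has cardinality $|A/M| \cdot |A/V_a| \geq 2$ (since $V_a \neq A$), and it is not an isomorphism because $a \in M \setminus \{0\}$ forces $(0,0) \sim (a,0)$. This contradicts simplicity, so $V_a = A$ for every nonzero $a \in A$. The principal obstacle is the telescoping lemma; once it is in hand, the congruence verification is a direct calculation using the $t$-invariance of $V_a$ and $M$ and the inclusion $V_a \subseteq M$.
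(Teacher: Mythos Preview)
Your argument is correct and takes a genuinely different route from the paper. The paper quotients by $V_a$ in \emph{both} coordinates, obtaining an epimorphism $A^2\to (A/V_a)^2$; it then has to rule out the possibility $V_a=\{0\}$ (otherwise this map is an isomorphism), and does so by invoking Lemma~\ref{simple indec} (simple $\Rightarrow$ irretractable, since $|A|^2$ is never prime) together with Proposition~\ref{newirretractable} to force $V_{a,1}\neq\{0\}$. Your congruence uses $M=V_a+\langle t^z(a)\mid z\in\Z\rangle$ in the first coordinate and $V_a$ in the second; since $0\neq a\in M$, non-injectivity is automatic and you never need the irretractability detour. The cost is your telescoping lemma, but that is elementary once one notices that the $t^z(a)$-steps land in $V_{a,1}$ and the $V_a$-step lands in some $V_{a,i+1}$.

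One small point worth making explicit: you assert that the quotient is again a (non-degenerate) solution. For the $\bar\gamma$-maps to be bijective one uses, as in the proof of Theorem~\ref{newsol}, that $T(a_1,a_2)=(t^{-1}(a_1-a_2),t^{-1}(a_2)+j_0)$ and $T^{-1}(a_1,a_2)=(t(a_1+a_2-j_0),t(a_2-j_0))$ both respect $\sim$; this is an immediate check from $t(M)=M$, $t(V_a)=V_a$ and $V_a\subseteq M$, so nothing new is needed.
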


\begin{proof}
    Suppose that $(A^2,r)$ is simple. Let $a\in A$ be a nonzero element.
    Let $\pi\colon A\longrightarrow A/V_a$ be the natural map. Note that
    \begin{eqnarray*}
        \lefteqn{\sigma_{(a_1+v_1,a_2+v_2)}(c_1+v_3,c_2+v_4)}\\
        &=&(t(c_1+v_3)+a_2+v_2,t(c_2+v_4-j_{t(c_1+v_3)+a_2+v_2-a_1-v_1})\\
        &=&(t(c_1)+a_2+t(v_3)+v_2,\\
        &&\qquad t(c_2-j_{t(c_1)+a_2-a_1})+t(j_{t(c_1)+a_2-a_1}-j_{t(c_1)+a_2-a_1+t(v_3)+v_2-v_1})+t(v_4)),
    \end{eqnarray*}
    for all $a_1,a_2,c_1,c_2\in A$ and all $v_1,v_2,v_3,v_4\in V_a$.
    Since $$j_{t(c_1)+a_2-a_1}-j_{t(c_1)+a_2-a_1+t(v_3)+v_2-v_1}\in V_a$$
    and $t(V_a)=V_a$, the map
    $$\sigma'_{(\pi(a_1),\pi(a_2))}\colon (A/V_a)^2\longrightarrow (A/V_a)^2,$$
    defined by
    $$\sigma'_{(\pi(a_1),\pi(a_2))}(\pi(c_1),\pi(c_2))=(\pi(t(c_1)+a_2),\pi(t(c_2-j_{t(c_1)+a_2-a_1}))),$$
    is well-defined. Similarly one can see that the map $\sigma''_{(\pi(a_1),\pi(a_2))}\colon (A/V_a)^2\longrightarrow (A/V_a)^2$,
    defined by
    $$\sigma''_{(\pi(a_1),\pi(a_2))}(\pi(c_1),\pi(c_2))=(\pi(t^{-1}(c_1-a_2)),\pi(t^{-1}(c_2)+j_{c_1-a_1})),$$
    is well-defined and $\sigma''_{(\pi(a_1),\pi(a_2))}=(\sigma'_{(\pi(a_1),\pi(a_2))})^{-1}$.
    Also one can check that the maps $T'\colon (A/V_a)^2\longrightarrow (A/V_a)^2$ and $T''\colon (A/V_a)^2\longrightarrow (A/V_a)^2$,
    defined by
    $$T'(\pi(a_1),\pi(a_2))=\sigma''_{(\pi(a_1),\pi(a_2)}(\pi(a_1),\pi(a_2))$$
    and
    $$T''(\pi(a_1),\pi(a_2))=(\pi(t(a_1+a_2-j_0)),\pi(t(a_2-j_0))),$$
    are well-defined and $T''=(T')^{-1}$. Let $r'\colon (A/V_a)^{2}\times (A/V_a)^2\longrightarrow (A/V_a)^{2}\times (A/V_a)^2$
    be the map defined by
    \begin{eqnarray*}
        \lefteqn{r'((\pi(a_1),\pi(a_2)),(\pi(c_1),\pi(c_2)))}\\
        &=&\left( \sigma'_{(\pi(a_1),\pi(a_2))}(\pi(c_1),\pi(c_2)),\left(\sigma'_{\sigma'_{(\pi(a_1),\pi(a_2))}(\pi(c_1),\pi(c_2))}\right)^{-1}(\pi(a_1),\pi(a_2)) \right),
    \end{eqnarray*}
    for all $a_1,a_2,c_1,c_2\in A$.
    Then $((A/V_a)^2,r')$ is a solution of the YBE and the map $f\colon A^2\longrightarrow (A/V_a)^2$,
    defined by $f(a_1,a_2)=(\pi(a_1),\pi(a_2))$ is an epimorphism of solutions.
    Since $(A^2,r)$ is simple, by Lemma \ref{simple indec}, $(A^2,r)$ is irretractable.
    By Proposition \ref{newirretractable}, $V_{a,1}\neq\{ 0\}$. Hence, since $(A^2,r)$ is simple,
    we have that $V_a=A$, and the result follows.
\end{proof}

The following result  is a partial converse of
Theorem~\ref{simple_necessary} and it generalizes \cite[Theorem
3.6]{CO22}.

\begin{theorem}\label{finiteodd}
    With the same notation and hypothesis as in Theorem \ref{newsol}.
    If $V_a=A$, for every nonzero element $a\in A$, then the solution $(A^2,r)$ is indecomposable
    and irretractable. If moreover $A\neq\{ 0\}$ is finite and either $|A|$ is odd or $t$
    has odd order, then the solution $(A^2,r)$ is simple.
\end{theorem}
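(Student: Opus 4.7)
Here is my plan for proving Theorem \ref{finiteodd}.

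The first two assertions follow directly from Propositions~\ref{newindecomposable} and~\ref{newirretractable}. The key observation is the inclusion $V_a \subseteq W$, where $W = \gr(j_a - j_c \mid a, c \in A)$ in the notation of Proposition~\ref{newindecomposable}: every generator of $V_{a,i}$ (for every $i \geq 1$) has the form $j_x - j_y$ for some $x, y \in A$, so an easy induction on $i$ yields $V_{a,i} \subseteq W$ and hence $V_a \subseteq W$. Thus $V_a = A$ for any nonzero $a$ forces $W = A$; taking $n = 0$ and letting $w$ range over $W = A$ in the description of $C$ in Proposition~\ref{newindecomposable} shows $C \supseteq W + j_0 = A$, giving orbit $A^2$ and indecomposability. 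For irretractability, I would apply the contrapositive of Proposition~\ref{newirretractable}: if there were a nonzero $a \in A$ with $j_{a+c} = j_c$ for every $c \in A$, then substituting $c = t^z(c')$ into (\ref{jt}) would give $j_{t^z(a)+c} = j_c$ for all $c \in A$ and $z \in \Z$, forcing $V_{a,1} = 0$ and hence $V_a = 0 \neq A$, a contradiction.

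For simplicity, assume additionally that $A$ is finite nonzero and either $|A|$ or the order of $t$ is odd; let $f\colon (A^2,r) \to (Y,s)$ be an epimorphism with $|Y|>1$ and let $\equiv$ denote the induced congruence on $A^2$. Since $(A^2,r)$ is indecomposable, all $\equiv$-classes have the same cardinality, so it suffices to show that the class $E$ of $(0,0)$ is $\{(0,0)\}$. Suppose for contradiction that $(d_1, d_2) \in E$ with $(d_1, d_2) \neq (0, 0)$. Applying $\sigma_{(c_1, c_2)}$ and $\sigma^{-1}_{(c_1, c_2)}$ to the relation $(0,0) \equiv (d_1, d_2)$ generates new equivalences; for instance, composing $\sigma_{(0,0)}$ with $\sigma^{-1}_{(0, t(d_1))}$ yields, after a direct calculation using (\ref{sigmainverse}), the cleaner relation $(-d_1, 0) \equiv (0, d_2)$. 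Moreover, applying the stabiliser of $(0,0)$ in $\mathcal{G}(A^2,r)$ to the pair $(0,0),(d_1,d_2)$ enlarges $E$ itself.

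I then plan to iterate these operations systematically to build, inside $E$, a set containing $\{0\} \times V$ for a progressively larger $t$-invariant subgroup $V \subseteq A$. Each $\sigma$-application multiplies second-coordinate shifts by $t$ and introduces terms of the form $-t(j_x)$; each $\sigma^{-1}$ introduces $t^{-1}$ and $+j_y$. Consequently, $V$ is $t$-stable and, more importantly, closed under addition of differences $j_x - j_{x+v}$ for $v$ already in $V$. This matches exactly the inductive definition of the groups $V_{d_2, i}$, so the conclusion is $V \supseteq V_{d_2}$ (or, by a symmetric argument, $V \supseteq V_{d_1}$ when $d_2 = 0$). The hypothesis $V_{d_2} = A$ then gives $V = A$, so $\{0\} \times A \subseteq E$; combined with the transitivity of the $\mathcal{G}(A^2,r)$-action, this forces $E = A^2$, contradicting $|Y| > 1$.

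The principal technical difficulty is to verify rigorously the inductive step of the previous paragraph while maintaining control of the first coordinate. At certain intermediate stages the construction produces identities of the form $(t^k - 1)x = 0$ or $2x = 0$ in $A$ that must be inverted in order to isolate a single translation generator. This is precisely where the parity hypothesis enters: when $|A|$ is odd, $2$ is invertible in the additive group of $A$; when $t$ has odd order $m$, the partial sums $1 + t + \cdots + t^{m-1}$ have favourable inversion behaviour on the $t$-eigenspace decomposition of $A$, and together these give enough control to carry the induction through. A concrete obstruction to the argument does arise without the parity hypothesis -- for instance, when $t$ acts as $-\id$ on a nontrivial $2$-torsion subgroup of $A$, the derived $V$ can fall strictly short of $V_{d_2}$ -- so the parity assumption is precisely what closes the proof.
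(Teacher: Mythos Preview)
Your treatment of indecomposability and irretractability is correct and matches the paper's (indeed your observation $V_a\subseteq W$ is cleaner than what the paper writes).

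For simplicity, however, your plan has a genuine structural gap. The paper's proof hinges on a dichotomy that your proposal does not make: either (Case~1) some fibre of $f$ contains two elements with the \emph{same second coordinate and different first coordinates}, or (Case~2) it does not. In Case~1 the paper runs exactly the inductive build-up of $V_{a,1}\subseteq V_{a,2}\subseteq\cdots$ that you describe---but with $a$ equal to the difference of \emph{first} coordinates, and \emph{without using the parity hypothesis at all}. In Case~2 there is no induction: one shows that the projections of $f^{-1}(f(0,0))$ onto either coordinate form a $t$-stable subgroup $S\subseteq A$ on which the map $a\mapsto j_a$ is injective; since $j_a=j_{-a}$, this forces $2a=0$ for every $a\in S$, which is where $|A|$ odd yields the contradiction. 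The odd-order-of-$t$ case is handled by showing $f(a_i,c_i)=f(t^{-2}(a_i),t^{-2}(c_i))$ and using that $t^{-2}$ generates $\langle t\rangle$; no ``partial sums $1+t+\cdots+t^{m-1}$'' appear.

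Your outline conflates these two cases. Starting from $(0,0)\equiv(d_1,d_2)$ with $d_2\neq 0$, your ``cleaner relation'' $(-d_1,0)\equiv(0,d_2)$ lies in a different fibre, so you have no seed of the form $(0,0)\equiv(0,v)$ to launch an induction inside $E$; and your appeal to a ``symmetric argument'' when $d_2=0$ is unjustified, since $r$ treats the two coordinates completely differently (this case is in fact an instance of Case~1, seeded by $a=d_1$). Most importantly, your diagnosis of where parity enters---inverting $2$ or controlling eigenspaces during the inductive step---is incorrect: the induction needs no parity, while the $2$-torsion obstruction arises only in the separate, non-inductive analysis of Case~2. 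Without identifying and handling Case~2 independently, the argument cannot close.
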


\begin{proof}
    Assume that $V_a=A$ for every nonzero element $a\in A$. In
    particular, $\gr ( j_a\mid a\in A)=A$, and thus $(A^2,r)$
    is indecomposable by Proposition \ref{newindecomposable}. Furthermore
    $V_{a,1}=\gr ( j_{c}-j_{c+t^z(a)}\mid c\in A,\; z\in\Z)\neq\{0\}$ for
    every nonzero element $a\in A$. Suppose that there exists a nonzero element $a\in A$ such that $j_c-j_{c+a}=0$,
    for all $c\in A$. Then, by (\ref{jt}),
    $$0=t^z(j_c-j_{c+a})=j_{t^z(c)}-j_{t^z(c)+t^z(a)},$$
    for all $c\in A$ and all $z\in\Z$. Hence $V_{a,1}=\{ 0\}$, a contradiction.
    Therefore for every nonzero element $a\in A$ there exists $c\in A$ such that $j_c-j_{c+a}\neq 0$.
     By Proposition \ref{newirretractable},$(A^2,r)$ is
    irretractable.

    Suppose now that $A\neq\{ 0\}$ is finite. Let $f\colon
    (A^2,r)\longrightarrow (Y,s)$ be an epimorphism of solutions which
    is not an isomorphism. Since $(A^2,r)$ is indecomposable, we have
    that $(Y,s)$ also is indecomposable. By \cite[Lemma 1]{CCP},
    $|f^{-1}(y)|=|f^{-1}(y')|$ for all $y,y'\in Y$.  We write
    $s(y_1,y_2)=(\sigma'_{y_1}(y_2),(\sigma'_{\sigma'_{y_1}(y_2)})^{-1}(y_1))$,
    for all $y_1,y_2\in Y$.

    We will frequently use the following consequence of the definition of a
    homomorphism of solutions:
    $$f(\sigma_{(u,v)}(x,y)) = f(\sigma_{(u',v')}(x',y'))$$
    whenever $f(u,v)=f(u',v')$  and  $f(x,y)=f(x',y')$.

    Note also that
    if $f(u,v)=f(u',v')$  and $f(x,y)=f(x',y')$, then
    \begin{eqnarray*}\sigma'_{f(u,v)}f(\sigma_{(u,v)}^{-1}(x,y))&=&f(\sigma_{(u,v)}(\sigma_{(u,v)}^{-1}(x,y)))\\
        &=&f(x,y)= f(x',y')\\
        &=&f(\sigma_{(u',v')}(\sigma_{(u',v')}^{-1}(x',y')))\\
        &=&\sigma'_{f(u',v')}f(\sigma_{(u',v')}^{-1}(x',y')),
    \end{eqnarray*}
    and since $\sigma'_{f(u,v)}=\sigma'_{f(u',v')}$ is bijective, we get that
    \begin{eqnarray}\label{inverse} f(\sigma_{(u,v)}^{-1}(x,y)) = f(\sigma_{(u',v')}^{-1}(x',y')).
    \end{eqnarray}

    Suppose first that there exist $a_1,a_2,c\in A$ such that
    $f(a_1,c)=f(a_2,c)$ and $a_1\neq a_2$. Let $a=a_2-a_1$. We have
    that
    \begin{eqnarray*}f(t(u)+c,t(v-j_{t(u)+c-a_1}))&=&f(\sigma_{(a_1,c)}(u,v))=f(\sigma_{(a_2,c)}(u,v))\\
        &=&f(t(u)+c,t(v-j_{t(u)+c-a_2})),\end{eqnarray*}
    for all $u,v\in A$.  For $x=t(u)+c$ and $y=t(v-j_{t(u)+c-a_1})$, one gets that
        $$f(x,y)=f(x,y+t(j_{x-a_1}-j_{x-a_2})).$$
    Thus, it also follows that
    $$f(u,v)=f(u,v+t(j_{u-a_1}-j_{u-a_2})),$$
    for all $u,v\in A$.
    Hence, we also get
    \begin{eqnarray*}\lefteqn{f(t(x)+v,t(y-j_{t(x)+v}))}\\
        &=&f(\sigma_{(0,v)}(x,y))\\
        &=&f(\sigma_{(0,v)}(x,y+t(j_{x-a_1}-j_{x-a_2})))\\
        &=&f(t(x)+v,t(y+t(j_{x-a_1}-j_{x-a_2})-j_{t(x)+v})),
    \end{eqnarray*}
    for all $x,y,v\in A$. For $x'=t(x)+v$ and $y'=t(y-j_{t(x)+v})$, this yields
    $$f(x',y')=f(x',y'+t^2(j_{t^{-1}(x'-v)-a_1}-j_{t^{-1}(x'-v)-a_2})).$$
     Thus
    \begin{eqnarray*}f(x,y)&=&f(x,y+t^2(j_{t^{-1}(x-v)-a_1}-j_{t^{-1}(x-v)-a_2})),\end{eqnarray*}
    for all $x,y,v\in A$. Hence by (\ref{jt}), for
    $v=x-t(w)$,
    $$f(x,y)=f(x,y+t^2(j_{w-a_1}-j_{w-a_2}))=f(x,y+(j_{t^2(w-a_1)}-j_{t^2(w-a_2)})),$$
    for all $x,y,w\in A$. Recall that $a=a_1-a_2$. For $c=t^2(w-a_1)$, we have that
    $$f(x,y)=f(x,y+(j_{c}-j_{c+t^2(a_1-a_2)}))=f(x,y+(j_c-j_{c+t^2(a)})),$$
    for all $x,y,c\in A$.
       We shall prove by induction on $i$ that
    \begin{equation}\label{ti}f(x,y)=f(x,y+(j_{c}-j_{c+t^{i}(a)})),
        \end{equation}
     all $x,y,c\in A$ and all $i\geq 2$.
    We have proved (\ref{ti}) for $i=2$. Suppose that (\ref{ti}) holds for some $i\geq 2$.
     Now we have that
    \begin{eqnarray*}\lefteqn{f(t(x),t(y-j_{t(x)}))}\\
        &=&f(\sigma_{(0,0)}(x,y))\\
        &=&f(\sigma_{(0,0)}(x,y+(j_{c}-j_{c+t^{i}(a)})))\\
        &=&f(t(x),t(y+(j_{c}-j_{c+t^{i}(a)})-j_{t(x)})),
    \end{eqnarray*}
    for all $x,y,c\in A$. For $x'=t(x)$, $y'=t(y-j_{t(x)})$ and by (\ref{jt}), we have that
    $$f(x',y')=f(x',y'+t(j_{c}-j_{c+t^{i}(a)}))=f(x',y'+(j_{t(c)}-j_{t(c)+t^{i+1}(a)})).$$
    Hence (\ref{ti}) holds for $i+1$. By induction (\ref{ti}) holds for all $i\geq 2$.
    Since $A$ is finite, the order of $t$ is finite.  Thus
    \begin{equation*}f(x,y)=f(x,y+(j_{c}-j_{c+t^{z}(a)})),
    \end{equation*}
    all $x,y,c\in A$ and all $z\in\Z$.
    Since $V_{a,1}=\gr(j_c-j_{c+t^z(a)}\mid c\in A,\; z\in Z )$, we get that
    $$f(x,y)=f(x,y+v_1),$$
    for all $x,y\in A$ and $v_1\in V_{a,1}$. We shall prove by induction on $n$ that
    \begin{equation}
        \label{eqV} f(x,y)=f(x,y+v_n),
    \end{equation}
    for all $x,y\in A$, $v_n\in V_{a,n}$ and every positive integer $n$.
    We have proved (\ref{eqV}) for $n=1$. Suppose that (\ref{eqV}) holds for
    some positive integer $n$.
    Then we have that
    \begin{eqnarray*}f(t(u),t(v-j_{t(u)-x}))&=&f(\sigma_{(x,0)}(u,v))\\
        &=&f(\sigma_{(x,v_n)}(u,v+v_n'))\\
        &=&f(t(u)+v_n,t(v+v_n'-j_{t(u)+v_n-x})),\end{eqnarray*}
    for all $x,u,v\in A$ and $v_n,v_n'\in V_{a,n}$.
    For $u'=t(u)$ and $v'=t(v-j_{t(u)-x})$, we get that
    $$f(u',v')=f(u'+v_n,v'+t(v_n'+j_{u'-x}-j_{u'+v_n-x})).$$
     This
    implies that
    $$f(u,v)=f(u+v_n,v+t(v_n'+j_{w}-j_{v_n+w})),$$
    for all $u,v,w\in A$ and $v_n,v_n'\in V_{a,n}$. Since $t(V_{a,n})=V_{a,n}$,
    by (\ref{jt}), we have that
        $$f(x,y)=f(x+v_n,y+v_n'+j_{w}-j_{w+t(v_n)}),$$
        for all $x,y,w\in A$ and $v_n,v_n'\in V_{a,n}$.
         As $f(u,v)=f(u,v+z_n)$, we then also get
    \begin{eqnarray*}\lefteqn{f(t(x)+v,t(y-j_{t(x)+v-u}))}\\
        &=&f(\sigma_{(u,v)}(x,y))\\
        &=&f(\sigma_{(u,v+z_n)}(x+v_n,y+v_n'+j_{w}-j_{w+t(v_n)}))\\
        &=&f(t(x+v_n)+v+z_n,t(y+v_n'+j_{w}-j_{w+t(v_n)}\\
        &&\qquad-j_{t(x+v_n)+v+z_n-u})),
    \end{eqnarray*}
        for all $u,v,x,y,w\in A$ and $v_n,v_n',z_n\in V_{a,n}$. For
    $t(v_n)=-z_n$, this leads to
    $$f(t(x)+v,t(y-j_{t(x)+v-u}))=f(t(x)+v,t(y+v_n'+j_{w}-j_{w-z_n}-j_{t(x)+v-u})),$$
    for all $x,y,u,v,w\in A$ and $z_n,v_n'\in V_{a,n}$. For $x'=t(x)+v$ and $y'=t(y-j_{t(x)+v-u})$,
    we get that
    $$f(x',y')=f(x',y'+t(v_n'+j_{w}-j_{w-z_n})).$$
    Thus, by (\ref{jt}),
    $$f(x',y')=f(x',y'+t(v_n')+j_{t(w)}-j_{t(w-z_n)}),$$
    for all $x',y',w\in A$ and $z_n,v_n'\in V_{a,n}$.
    Since $t(V_{a,n})=V_{a,n}$, it follows that
    $$f(x,y)=f(x,y+v_n+j_{w}-j_{w+z_n'}),$$
     for all $x,y,w\in A$ and $v_n,z_n'\in V_{a,n}$.
    Then the equality $V_{a,n+1}=V_{a,n}+\gr(j_{w}-j_{w+z_n'}\mid w\in A,\; z_n'\in
    V_{a,n})$ implies that
    $$f(x,y)=f(x,y+v_{n+1}),$$
    for all $x,y\in A$  and $v_{n+1}\in V_{a,n+1}$, so the inductive claim (\ref{ti}) follows. By the definition of
    $V_a$ we then get that
    $$f(x,y)=f(x,y+v),$$
    for all $x,y\in A$ and $v\in V_{a}=A$. Therefore
    $$f(x,0)=f(x,y),$$
    for all $x,y\in A$. This implies that
    $$f(0,0)=f(0,-t(j_0))=f(\sigma_{(0,0)}(0,0))=f(\sigma_{(0,x)}(0,y))=f(x,t(y-j_{x}))=f(x,y),$$
    for all $x,y\in A$. Therefore $|Y|=1$ in this case, as
    desired. Hence we may assume that  $f(x,y)=f(u,v)$ and $y=v$
    imply  $x=u$.  We will show that this case leads to a
    contradiction if $|A|$ is odd or $t$ has odd order.

    Let $f(0,0)=y_0$.  Then $f^{-1}(y_0)$ is of the form $f^{-1}(y_0)=\{ (a_i,c_i)\mid i=1, \dots
    ,m\}$, where $m>1$ and $c_i\neq c_j$ for all $i\neq j$. We
    may assume that $(0,0)=(a_1,c_1)$.
    Using (\ref{sigmainverse}) and (\ref{inverse}), for every $i,u\in\{ 1,\dots, m\}$, we have that
    \begin{eqnarray}\label{y0new1}
        y_0&=&f(\sigma_{(0,0)}\sigma^{-1}_{(0,0)}(0,0))=f(\sigma_{(a_u,c_u)}\sigma^{-1}_{(a_i,c_i)}(0,0))\notag\\
        &=&f(\sigma_{(a_u,c_u)}(t^{-1}(-c_i),j_{a_i}))\notag\\
        &=&f(c_u-c_i,t(j_{a_i}-j_{c_u-c_i-a_u}))
    \end{eqnarray}
    and
        \begin{eqnarray}\label{y0new2}
        y_0&=&f(\sigma^{-1}_{(0,0)}\sigma_{(0,0)}(0,0))=f(\sigma^{-1}_{(a_i,c_i)}\sigma_{(a_u,c_u)}(0,0))\notag\\
        &=&f(\sigma^{-1}_{(a_i,c_i)}(c_u,t(-j_{c_u-a_u})))\notag\\
        &=&f(t^{-1}(c_u-c_i),j_{c_u-a_i}-j_{c_u-a_u}) .
    \end{eqnarray}
    By (\ref{y0new1}) and (\ref{y0new2}), since $|f^{-1}(y_0)|=m=|\{ c_u\mid u=1,\dots ,m\}|$,
    for every $i\in\{ 1,\dots ,m\}$ we have that
    \begin{eqnarray}\label{y0new3}
        f^{-1}(y_0)&=&\{ (a_u,c_u)\mid u=1,\dots ,m\}\notag\\
        &=&\{ (c_u-c_i,t(j_{a_i}-j_{c_u-c_i-a_u}))\mid u=1,\dots ,m\}\notag\\
        &=&\{ (t^{-1}(c_u-c_i),j_{c_u-a_i}-j_{c_u-a_u})\mid u=1,\dots
        ,m\}.
    \end{eqnarray}
    Let $S=\{ a_u\mid u=1,\dots ,m\}$. By (\ref{y0new3}), since $a_1=c_1=0$, we get that
    $$S=\{ c_u\mid u=1,\dots ,m\}=\{ t^{-1}(c_u)\mid u=1,\dots ,m\}$$
    is a subgroup of $A$ and $t(S)=S$. In particular, all $a_i, i=1,\ldots, m$,
    are different. Moreover, using (\ref{y0new1}) for $i=1$,
    \begin{eqnarray*}
        S&=&\{ c_u\mid u=1,\dots ,m\}=\{ t(j_0-j_{c_u-a_u})\mid u=1,\dots ,m\}\\
        &=&\{ j_0-j_{c_u-a_u}\mid u=1,\dots ,m\}\subseteq \{ j_0-j_a\mid a\in S\}.
    \end{eqnarray*}
    Since $|S|=m$, it follows that $|\{j_a\mid a\in S\}|=m$. Therefore,
    \begin{equation} \label{jj}
        \forall a,c\in S,\; j_a=j_c\implies a=c.
    \end{equation}
    Since $j_a=j_{-a}$, we get that $a=-a$ for all $a\in S$.
      If $|A|$ is odd, then $|S|=m=1$, a contradiction.

     Suppose that $t$ has odd order.
     For every $i\in\{ 1,\dots ,m\}$ there exists $u_i\in \{1,\dots ,m\}$ such that $c_{u_i}=t^{-1}(c_i)=t^{-1}(-c_i)$.
     Hence, using (\ref{jt}) and (\ref{sigmainverse}), we get
     \begin{eqnarray*}
     f(0,0)=y_0&=&f(\sigma_{(0,0)}\sigma^{-1}_{(a_{u_i},c_{u_i})}(t^{-1}(-c_i),j_{a_i}-j_0))\\
        &=&f(\sigma_{(0,0)}(0,t^{-1}(j_{a_i}-j_0)+j_{t^{-1}(c_i)+a_{u_i}}))\\
        &=&f(0,j_{a_i}-j_0+t(j_{t^{-1}(c_i)+a_{u_i}}-j_{0}))\\
        &=&f(0,-j_{a_i}+j_0+j_{c_i+t(a_{u_i})}-j_{0})\\
        &=&f(0,j_{c_i+t(a_{u_i})}-j_{a_i})
        \end{eqnarray*}
        where the  second last equality follows because $-j_{a_i}+j_0=j_{a_i}-j_0\in S$.
     Therefore,
       $$j_{a_i}=j_{c_i+t(a_{u_i})}$$
     and by (\ref{jj}), we get that $a_{u_i}=t^{-1}(a_i)+t^{-1}(c_i)$.
      Hence, for every $i\in\{ 1,\dots ,m\}$
     \begin{eqnarray*}
        y_0&=&f(a_i,c_i)=f(a_{u_i},c_{u_i})=f(t^{-1}(a_i)+t^{-1}(c_i),t^{-1}(c_i))\\
        &=&f(t^{-1}(t^{-1}(a_i)+t^{-1}(c_i))+ t^{-2}(c_i),t^{-2}(c_i))=f(t^{-2}(a_i),t^{-2}(c_i))\\
        &=&f(t^{-2z}(a_i),t^{-2z}(c_i)),
        \end{eqnarray*}
     for all $z\in\Z$ (the fourth and the sixth equality are obtained by repeating
     the argument).
      Since the order of $t$ is odd, there exists $z\in \Z$ such that
     $t^{-2z}=t^{-1}$. Hence
     $$f(t^{-1}(a_2)+t^{-1}(c_2),t^{-1}(c_2))=f(t^{-1}(a_2),t^{-1}(c_2)),$$
     and thus $t^{-1}(a_2)+t^{-1}(c_2)=t^{-1}(a_2)$ and then $c_2=0=c_1$, a contradiction.
         Therefore the result follows.
\end{proof}

\begin{question}
Does the converse of Theorem \ref{simple_necessary} hold for any
nonzero finite abelian group $A$?
\end{question}

\section{An alternative construction}\label{section7}

In this section we shall see an alternative construction of the
solutions constructed in Section \ref{section5}, in the case where
$t-\id$ is also an automorphism of the group $A$. This
construction uses the asymmetric product of left braces, in
particular, this answers in affirmative Question 7.4 stated in
\cite{CO21} in the case where $t-1$ is invertible in $\Z/(n)$.
Furthermore, this allows us to describe the left brace structure
of the permutation groups of these solutions.

Let $A$ be a nontrivial additive abelian group. Let
$$H=\{ f\colon A\longrightarrow \Z\mid f(x)\neq 0 \mbox{ for finitely many }x\in A\}.$$
We define a sum on $H$ by $(f+g)(x)=f(x)+g(x)$, for all $f,g\in H$
and all $x\in A$. Let $t\in\Aut(A)$  be such that
$t-\id\in\Aut(A)$. Let $T$ be the subgroup of $\Aut(A)$ generated
by $t$.

Let $A_1=A\rtimes T$ be the semidirect product of the trivial left
braces on the groups $A$ and $T$. Let $(j_a)_{a\in A}$ be a family
of elements  of $A$ such that $j_a=j_{-a}$  and
\begin{equation} \label{jbis}
    j_{t^z(a)}=t^z(j_a)-(t^z-id)(j_0),
\end{equation}
for all $a\in A$ and all $z\in \Z$.  For every $a\in A$ and
$z\in\Z$, let $g_{(a,t^z)}\colon A\longrightarrow A$ be the map
defined by
$$g_{(a,t^z)}(x)=t^{-z}\left(x-a-(t-\id)^{-2}(t^{z+1}-t)(j_0)\right),$$
for all $x\in A$. Let $\alpha\colon A_1\longrightarrow \Aut(H,+)$
be the map defined by
$$\alpha(a,t^z)(f)=f\circ g_{(a,t^z)},$$
for all $a\in A$, $f\in H$ and $z\in\Z$.
 By the definition of the
sum in $H$, it is clear that $\alpha(a,t^z)$ is an automorphism of
$H$ because $g_{(a,t^z)}$ is bijective. Let $a_1,a_2\in A$, $f\in
H$ and let $z_1,z_2\in \Z$. Note that
\begin{eqnarray*}\lefteqn{g_{(a_2,t^{z_2})}g_{(a_1,t^{z_1})(x)}}\\
    &=&g_{(a_2,t^{z_2})}\left(t^{-z_1}\left(x-a_1-(t-\id)^{-2}(t^{z_1+1}-t)(j_0)\right)\right)\\
    &=&t^{-z_2}\left( t^{-z_1}\left(x-a_1-(t-\id)^{-2}(t^{z_1+1}-t)(j_0)\right)-a_2-(t-\id)^{-2}(t^{z_2+1}-t)(j_0)\right)\\
    &=&t^{-z_1-z_2}\left(x-a_1-(t-\id)^{-2}(t^{z_1+1}-t)(j_0)-t^{z_1}(a_2)-t^{z_1}(t-\id)^{-2}(t^{z_2+1}-t)(j_0)\right)\\
    &=&t^{-(z_1+z_2)}\left(x-(a_1+t^{z_1}(a_2))-(t-\id)^{-2}(t^{z_1+z_2+1}-t)(j_0)\right)\\
    &=&g_{(a_1+t^{z_1}(a_2),t^{z_1+z_2})}(x)\\
    &=&g_{(a_1,t^{z_1})(a_2,t^{z_2})}(x),
\end{eqnarray*}
for all $x\in A$. Hence $$g_{(a_2,t^{z_2})}\circ
g_{(a_1,t^{z_1})}=g_{(a_1,t^{z_1})(a_2,t^{z_2})}$$
    and thus
    \begin{eqnarray*}
        \alpha(a_1,t^{z_1})\alpha(a_2,t^{z_2})(f)&=&\alpha(a_1,t^{z_1})(f\circ g_{(a_2,t^{z_2})})\\
        &=&f\circ g_{(a_2,t^{z_2})}\circ g_{(a_1,t^{z_1})}\\
        &=&f\circ g_{(a_1,t^{z_1})(a_2,t^{z_2})}\\
        &=&\alpha((a_1,t^{z_1})(a_2,t^{z_2}))(f),
    \end{eqnarray*}
    for all $a_1,a_2\in A$, $z_1,z_2\in\Z$ and $f\in H$.
 Hence $\alpha$ is a group homomorphism from the multiplicative
group of $A_1$ to $\Aut(H,+)$.

Consider the semidirect product
$$H \rtimes_{\alpha} A_1.$$
Consider the bilinear form
$$b_{(j_a)_{a\in A}}\colon H \times H \longrightarrow A_1,$$
defined by
$$b_{ (j_a)_{a\in a}}(f,h)=\left(\sum_{x,y\in A}f(x)h(y)t(j_{x-y}-j_0),\id\right),$$
for all $f,h\in H$. Note that, $g_{(c,t^z)}$ is bijective and
    \begin{equation}\label{g-1}g^{-1}_{(c,t^z)}(x)=t^z(x)+c+(t-\id)^{-2}(t^{z+1}-t)(j_0),
        \end{equation}
    for all $c,x\in A$ and $z\in\Z$. In the left brace $A_1$, for $c\in
A$ and $z\in \Z$, we have
\begin{align*}\lambda_{(c,t^z)}(b_{(j_a)_{a\in A}}(f,h))=&(c,t^z)\left(\sum_{x,y\in A}f(x)h(y)t(j_{x-y}-j_0),\id\right)-(c,t^z)\\
    =&\left(t^z\left(\sum_{x,y\in A}f(x)h(y)t(j_{x-y}-j_0)\right),\id\right)\\
    =&\left(\sum_{x,y\in A}f(x)h(y)t^{z+1}(j_{x-y}-j_0),\id\right)
    \end{align*}
and
\begin{eqnarray}\label{newbilin}\lefteqn{b_{(j_a)_{a\in A}}(\alpha(c,t^z)(f),\alpha(c,t^z)(h))}\notag\\
    &=&\left(\sum_{x,y\in A}\alpha(c,t^z)(f)(x)\alpha(c,t^z)(h)(y)t(j_{x-y}-j_0),\id\right)\notag\\
    &=&\left(\sum_{x,y\in A}f\left(g_{(c,t^z)}(x)\right)h\left(g_{(c,t^z)}(y)\right)t(j_{x-y}-j_0),\id\right)\notag\\
    &=&\left(\sum_{x,y\in A}f(x)h(y)t(j_{g^{-1}_{(c,t^z)}(x)-g^{-1}_{(c,t^z)}(y)}-j_0),\id\right)\notag\\
    &=&\left(\sum_{x,y\in A}f(x)h(y)t(j_{t^z(x)-t^z(y)}-j_0),\id\right)\notag\\
    &=&\left(\sum_{x,y\in A}f(x)h(y)t^{z+1}(j_{x-y}-j_0),\id\right),
\end{eqnarray}
where  the third equality follows because $g_{(c,t^z)}$ is
bijective, so that we may replace $g_{(c,t^z)}(x)$  by $x$ and
$g_{(c,t^z)}(y)$  by $y$, the fourth equality follows from
(\ref{g-1}) and the last equality follows from (\ref{jbis}). Hence
\begin{equation}\label{asymmetric}\lambda_{(c,t^z)}(b_{(j_a)_{a\in A}}(f,h))=b_{(j_a)_{a\in A}}(\alpha(c,t^z)(f),\alpha(c,t^z)(h)),
\end{equation}
for all $f,h\in H$ and all $(c,t^z)\in A_1$, and thus we can
consider the asymmetric product  (\cite{CCS}) of the trivial brace
$H$ and the left brace $A_1$, defined via $\alpha$ and
$b_{(j_a)_{a\in A}}$:
$$B_{(j_a)_{a\in A}} = H \rtimes_{\circ} A_1.$$
Thus, $(B_{(j_a)_{a\in A}},\circ) \cong H \rtimes_{\alpha} A_1$
and the addition is defined by
$$(f,(c_1,t^{z_1}))+(h,(c_2,t^{z_2})) =(f+h, (c_1+c_2,t^{z_1+z_2})+b_{(j_a)_{a\in A}}(f,h)),$$
for all $f,h \in  H$, $c_1,c_2 \in A$ and integers $z_1,z_2$. We
know that $(B_{(j_a)_{a\in A}},+,\circ)$ is a left brace. We
denote the lambda map of $B_{(j_a)_{a\in A}}$ by
$\lambda^{(j_a)_{a\in A}}$. Then we have
\begin{eqnarray}\lefteqn{\lambda^{(j_a)_{a\in A}}_{(f,(c_1,t^{z_1}))}(h,(c_2,t^{z_2}))}\nonumber\\
    &=& (f,(c_1,t^{z_1})) (h,(c_2,t^{z_2})) -(f,(c_1,t^{z_1})) \nonumber \\
    &=&(f+\alpha(c_1,t^{z_1})(h), (c_1+t^{z_1}(c_2),t^{z_1+z_2}))-(f,(c_1,t^{z_1})) \nonumber \\
    &=&(\alpha (c_1,t^{z_1})(h),(t^{z_1}(c_2),t^{z_2})-b_{(j_a)_{a\in A}}(f,\alpha (c_1,t^{z_1})(h))) . \label{lambdauibis}
\end{eqnarray}
For every $a\in A$, we define $e_a\colon A\longrightarrow \Z$ by
$$e_a(x)=\delta_{a,x}=\left\{ \begin{array}{lr}
    1&\quad\mbox{if }x=a\\
    0&\quad\mbox{otherwise}\end{array}\right.$$
    Thus $e_a\in H$ for all $a\in A$. Note that
    $$e_a(g_{(c,t^z)}(x))=\delta_{a,g_{(c,t^z)}(x)}=
    \delta_{g^{-1}_{(c,t^z)}(a),x}=e_{g^{-1}_{(c,t^z)}(a)}(x),$$
    for all $a,c,x\in A$ and $z\in\Z$. Hence, by the definition of
     $\alpha_{(c,t^z)}$,
    $$\alpha_{(c,t^z)}(e_a)=e_{g^{-1}_{(c,t^z)}(a)},$$
    for all $a,c\in A$ and $z\in \Z$.

Put
$$x_{(a,c)} =(e_a,(c-t(t-\id)^{-1}(j_0),t))\in B_{(j_a)_{a\in A}}.$$
Using (\ref{lambdauibis}) we get
\begin{eqnarray}
    \lefteqn{\lambda^{(j_a)_{a\in A}}_{x_{(a_1,a_2)}} (x_{(c_1,c_2)})}\\
     &=& \lambda^{(j_a)_{a\in A}}_{(e_{a_1},(a_2-t(t-\id)^{-1}(j_0),t))} (e_{c_1},(c_2-t(t-\id)^{-1}(j_0),t))\nonumber\\
    &=& (\alpha(a_2-t(t-\id)^{-1}(j_0),t)(e_{c_1}),\nonumber\\
    &&(t(c_2-t(t-\id)^{-1}(j_0)),t)-b_{(j_a)_{a\in A}}(e_{a_1},\alpha(a_2-t(t-\id)^{-1}(j_0),t)(e_{c_1})))\nonumber\\
    &=&(e_{t(c_1)+a_2},(t(c_2-t(t-\id)^{-1}(j_0)),t)-b_{(j_a)_{a\in A}}(e_{a_1},e_{t(c_1)+a_2}))\nonumber \\
    &=&(e_{t(c_1)+a_2},(t(c_2)-t^2(t-\id)^{-1}(j_0)-t(j_{a_1-t(c_1)-a_2}-j_0),t))\nonumber \\
    &=&(e_{t(c_1)+a_2},t(c_2-j_{a_1-t(c_1)-a_2})-t(t-\id)^{-1}(j_0),t)\nonumber\\
    &=& x_{(t(c_1)+a_2, t(c_2-j_{t(c_1)+a_2-a_1}))}. \label{action1bis}
\end{eqnarray}
Let $X=\{ x_{(a,c)}\mid a,c\in A\}$ and
$$r_{(j_a)_{a\in A}}:X\times X \rightarrow X\times X$$
be the map defined by
$$r_{(j_a)_{a\in A}}(x_{(a_1,a_2)}, x_{(c_1,c_2)}) =
(\lambda^{(j_a)_{a\in A}}_{x_{(a_1,a_2)}} (x_{(c_1,c_2)}),
(\lambda^{(j_a)_{a\in A}}_{\lambda^{(j_a)_{a\in
A}}_{x_{(a_1,a_2)}}(x_{(c_1,c_2)})})^{-1}(x_{(a_1,a_2} )). $$
Note that
$r_{(j_a)_{a\in A}}$ is the restriction to $X^{2}$ of the solution
associated to the left brace $B_{(j_a)_{a\in A}}$ (see \cite[Lemma
2]{CJOComm}). Thus $(X,r_{(j_a)_{a\in A}})$ is a solution of the
YBE.

\begin{proposition}
    The solution $(X,r_{(j_a)_{a\in A}})$ of the YBE is isomorphic to the solution $(A^2,r)$ defined in Section \ref{section5}.
\end{proposition}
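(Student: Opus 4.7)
The plan is to exhibit the natural candidate isomorphism $\phi\colon A^2\longrightarrow X$ defined by $\phi(a_1,a_2)=x_{(a_1,a_2)}$ and verify that it is a bijective homomorphism of solutions.

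First I would check that $\phi$ is a bijection. Surjectivity is immediate from the definition $X=\{x_{(a,c)}\mid a,c\in A\}$. For injectivity, recall $x_{(a_1,a_2)}=(e_{a_1},(a_2-t(t-\id)^{-1}(j_0),t))$ in $B_{(j_a)_{a\in A}}$. If $x_{(a_1,a_2)}=x_{(a_1',a_2')}$, then comparing the $H$-coordinates forces $e_{a_1}=e_{a_1'}$, hence $a_1=a_1'$, and then the $A_1$-coordinates yield $a_2=a_2'$.

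Next I would verify that $\phi$ is a homomorphism of solutions. By the observation made in Section~\ref{prelim}, it suffices to show $\phi(\sigma_{(a_1,a_2)}(c_1,c_2))=\lambda^{(j_a)_{a\in A}}_{x_{(a_1,a_2)}}(x_{(c_1,c_2)})$ for all $a_1,a_2,c_1,c_2\in A$, because both $(A^2,r)$ and $(X,r_{(j_a)_{a\in A}})$ are involutive non-degenerate set-theoretic solutions and the $\gamma$-maps are determined by the $\sigma$-maps via $r^2=\id$. This identity is essentially already proved in the paper: the formula for $\sigma_{(a_1,a_2)}(c_1,c_2)$ in Theorem~\ref{newsol} is
\[
\sigma_{(a_1,a_2)}(c_1,c_2)=(t(c_1)+a_2,\,t(c_2-j_{t(c_1)+a_2-a_1})),
\]
while the computation culminating in (\ref{action1bis}) gives
\[
\lambda^{(j_a)_{a\in A}}_{x_{(a_1,a_2)}}(x_{(c_1,c_2)})=x_{(t(c_1)+a_2,\,t(c_2-j_{t(c_1)+a_2-a_1}))}.
\]
Applying $\phi$ to the former yields exactly the latter, so $\phi$ is a homomorphism.

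Combining bijectivity with the intertwining identity, $\phi$ is an isomorphism of solutions between $(A^2,r)$ and $(X,r_{(j_a)_{a\in A}})$. I do not expect any genuine obstacle here: the only delicate point is confirming that the family $(j_a)_{a\in A}$ satisfies condition (\ref{jt}) from Theorem~\ref{newsol} exactly when it satisfies (\ref{jbis}), which is the relation used to construct $B_{(j_a)_{a\in A}}$; a one-line check shows that (\ref{jt}) and (\ref{jbis}) are equivalent reformulations of the same condition on $(j_a-j_0)_{a\in A}$. All of the real work has already been done in setting up the asymmetric product and in the computation (\ref{newbilin})--(\ref{action1bis}), so the proof is essentially an extraction of that formula.
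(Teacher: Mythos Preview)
Your proposal is correct and follows exactly the paper's approach: define the map $(a,c)\mapsto x_{(a,c)}$ and invoke the computation (\ref{action1bis}) to verify that it intertwines $\sigma$ with $\lambda^{(j_a)_{a\in A}}$. Your write-up is in fact more detailed than the paper's, which omits the explicit bijectivity check and the remark on the equivalence of (\ref{jt}) and (\ref{jbis}).
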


\begin{proof}
    Recall that
    $$r((a_1,a_2),(c_1,c_2))=(\sigma_{(a_1,a_2)}(c_1,c_2),\sigma^{-1}_{\sigma_{(a_1,a_2)}(c_1,c_2)}(a_1,a_2)),$$
    where
    $\sigma_{(a_1,a_2)}(c_1,c_2)=(t(c_1)+a_2,t(c_2-j_{t(c_1)+a_2-a_1}))$.
    Consider the map $f\colon A^2\longrightarrow X$ defined by
$f(a,c)=x_{(a,c)}$, for all $a,c\in A$. Then by
(\ref{action1bis}), $f$ is an isomorphism from
the solution $(A^2,r)$, defined in Section \ref{section5}, to
$(X,r_{(j_a)_{a\in A}})$.
\end{proof}

 In particular this result answers in affirmative Question 7.4
stated in \cite{CO21}.

Now we shall study the structure of left brace of
$\mathcal{G}(X,r_{(j_a)_{a\in A}})$. We shall prove that if $A$
has finite exponent $n$ and $t$ has finite order coprime with $n$,
then $\mathcal{G}(X,r_{(j_a)_{a\in A}})$ is an asymmetric product
of a trivial brace $\overline{H}$ and the semidirect product
$A_1=A\rtimes T$ of the trivial braces on the groups $A$ and $T$.

 Note that if $A$ has finite exponent $n$, then we can replace
$H$ in the above construction by
$$H'=\{ f\colon A\longrightarrow \Z/(n)\mid f(x)\neq 0 \mbox{ for finitely many }x\in A\}.$$
Indeed, we define a sum on $H'$ by $(f+h)(x)=f(x)+h(x)$, for all $f,h\in H'$ and $x\in A$,
and $\alpha'\colon A_1\longrightarrow \Aut(H',+)$ by $\alpha'(a,t^z)(f)=f\circ g_{(a,t^z)}$
for all $a\in A$, $z\in\Z$ and $f\in H'$. Then $\alpha'$ is a homomorphism from the multiplicative
group of $A_1$ to $\Aut(H',+)$. We define
$$b'_{(j_a)_{a\in A}}\colon H' \times H' \longrightarrow A_1,$$
by
$$b'_{ (j_a)_{a\in a}}(f,h)=\left(\sum_{x,y\in A}f(x)h(y)t(j_{x-y}-j_0),\id\right),$$
for all $f,h\in H'$. Since $A$ has exponent $n$ and
$f(x)h(y)\in\Z/(n)$, $b'_{(j_a)_{a\in A}}$ is a well-defined
bilinear form. Then (\ref{asymmetric}) is satisfied for $\alpha'$
and $b'_{(j_a)_{a\in A}}$ and thus we can consider the asymmetric
product $B'_{(j_a)_{a\in A}}=H'\rtimes_{\circ} A_1$ via $\alpha'$
and $b'_{(j_a)_{a\in A}}$. For every $a\in A$, we define
$e'_a\colon A\longrightarrow \Z/(n)$ by
$$e'_a(x)=\delta_{a,x}=\left\{ \begin{array}{lr}
    1&\quad\mbox{if }x=a\\
    0&\quad\mbox{otherwise}\end{array}\right.$$
Thus $e_a\in H'$ for all $a\in A$. Put
$$x'_{(a,c)} =(e'_a,(c-t(t-\id)^{-1}(j_0),t))\in B'_{(j_a)_{a\in A}}.$$
Let $X'=\{ x'_{(a,c)}\mid a,c\in A\}$ and
$$r'_{(j_a)_{a\in A}}:X'\times X' \rightarrow X'\times X'$$
be the restriction to $(X')^2$ of the solution  associated to the left brace $B'_{(j_a)_{a\in A}}$.
One can see that the map $X\longrightarrow X' ,\; x_{(a,c)}\mapsto x'_{(a,c)}$
is an isomorphism of solutions from $(X,r_{(j_a)_{a\in A}})$ to $(X',r'_{(j_a)_{a\in A}})$. Hence
$$\mathcal{G}(X,r_{(j_a)_{a\in A}})\cong\mathcal{G}(X',r'_{(j_a)_{a\in A}})$$
as left braces.

The advantage of dealing with $X', r'$ and $H'$ in place of $X, r,
H$ is that in the most interesting case when $X$ is a finite set
the group $H'$ is also finite. Moreover, in an important special
case $\mathcal{G}(X,r_{(j_a)_{a\in A}})$ comes from an asymmetric
product of trivial left braces, as we shall see in
Theorem~\ref{newasym}, and it is often isomorphic to
$B'_{(j_a)_{a\in A}}$, see Proposition~\ref{Bcbis}.

 Note that $\gr(X')_+$ is a left ideal of the left
brace $B'_{(j_a)_{a\in A}}$, in particular $\gr(X')_+$ is a left
brace with the operations inherited from $B'_{(j_a)_{a\in A}}$. It
is difficult to study the structure of this left subbrace of
$B'_{(j_a)_{a\in A}}$ in general. However, in the next
lemma we show that in some important cases one can solve this
problem because $B'_{(j_a)_{a\in A}}=\gr(X')_+$ and the structure
of left brace of $B'_{(j_a)_{a\in A}}$ is known.  This result
is needed in the proof of Proposition \ref{Bcbis}, which is the
key to prove Theorem \ref{newasym}.

\begin{lemma}
    \label{generatorsbis} Suppose that $A$ has finite exponent $n>1$.
    If $t$ has finite order coprime with $n$, then $B'_{(j_a)_{a\in A}}=\gr(X')_+$.
\end{lemma}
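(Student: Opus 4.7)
The plan is to show that every element $(f,(c,t^z)) \in B'_{(j_a)_{a\in A}}$ lies in the additive subgroup generated by $X'$. Recall that the addition in the asymmetric product is
$$(f_1,(c_1,t^{z_1})) + (f_2,(c_2,t^{z_2})) = \bigl(f_1+f_2,\,(c_1+c_2+p(f_1,f_2),\,t^{z_1+z_2})\bigr),$$
where $p(f_1,f_2) = \sum_{x,y\in A} f_1(x) f_2(y)\, t(j_{x-y}-j_0) \in A$ is the $A$-component of $b'_{(j_a)_{a\in A}}(f_1,f_2)$. A crucial observation is that $p(e'_a,e'_a) = t(j_{a-a}-j_0) = 0$, so the cocycle contribution vanishes whenever both arguments are supported at a single point of $A$. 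Since $p$ is $\Z$-bilinear, this also forces $p(k e'_a,e'_a)=0$ for every $k\in\Z$.

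The first step, which is the main point of the argument, is to produce a pure $T$-generator. An easy induction using $p(ke'_a,e'_a)=0$ yields
$$k\cdot x'_{(a,c)} \;=\; \bigl(k e'_a,\,(kc - k t(t-\id)^{-1}(j_0),\,t^k)\bigr)$$
for every positive integer $k$. Let $\ell$ denote the order of $t$. Using the hypothesis $\gcd(n,\ell)=1$, pick $m$ with $nm\equiv 1 \pmod{\ell}$. Taking $k=nm$ in the formula above for $a=c=0$ kills both the $H'$-coordinate (since $H'$ has exponent $n$) and the $A$-coordinate (since $A$ has exponent $n$), while $t^{nm}=t$. Hence $(0,(0,t))\in \gr(X')_+$. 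This is where the coprimality assumption is essential: the same integer $nm$ must simultaneously vanish modulo $n$ and equal $1$ modulo $\ell$.

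Next, a direct calculation gives $x'_{(a,c)} - x'_{(a,c')} = (0,(c-c',\id))$ (the first coordinates cancel, the shifts $-t(t-\id)^{-1}(j_0)$ cancel, and $p(e'_a,-e'_a)=0$). Combining this with the previous step and taking integer multiples of $(0,(0,t))$ to reach any $t^z$, every element of the form $(0,(c,t^z))$ lies in $\gr(X')_+$; that is, $\{0\}\times A_1 \subseteq \gr(X')_+$.

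Finally, for any $a\in A$, subtracting the appropriate element of $\{0\}\times A_1$ from $x'_{(a,c)}$ produces $(e'_a,(0,\id))\in \gr(X')_+$. Scalar multiples $(ke'_a,(0,\id))$ are then obtained by the same vanishing-cocycle induction, while sums $(e'_a+e'_b,(0,\id))$ are reached by forming $(e'_a,(0,\id))+(e'_b,(0,\id))$ and cancelling the cocycle term $(p(e'_a,e'_b),\id)$ using the already-available subgroup $\{0\}\times A_1$. Iterating, $(f,(0,\id))\in\gr(X')_+$ for every $f\in H'$. Then $(f,(0,\id))+(0,(c,t^z)) = (f,(c,t^z))$ (there is no cross term since $p(f,0)=0$), so $B'_{(j_a)_{a\in A}}=\gr(X')_+$. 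The only delicate step is the coprimality trick described above; the rest is bookkeeping, made painless by the fact that $p$ vanishes on the elementary pairs we manipulate.
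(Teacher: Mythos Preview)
Your proof is correct and follows essentially the same strategy as the paper's: both first obtain the elements $(0,(c,\id))$ via the difference of two $x'$'s with the same first index, then use an induction on multiples of a single $x'_{(a,c)}$ together with the coprimality of $n$ and the order of $t$ to produce $(0,(0,t))$, and finally assemble the general element from these pieces. The only noteworthy difference is in the final assembly step: the paper invokes the fact that $\gr(X')_+$ is a left ideal (hence a multiplicative subgroup) to write $(f,(a,t^s))$ as a \emph{product} $(e_{a_1},(0,\id))^{z_1}\cdots(e_{a_k},(0,\id))^{z_k}(0,(a,t^s))$, whereas you work purely additively, correcting the cocycle term $p(f,e'_b)$ at each stage using the already-obtained subgroup $\{0\}\times A_1$. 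Your route avoids appealing to the left-ideal property but requires slightly more bookkeeping; the paper's route is terser once that property is on the table.
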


\begin{proof} Let $G=\gr(X')_+$.
    We have that
    \begin{align*}x'_{(a,c)}-x'_{(a,0)}&=(e'_a,(c-t(t-\id)^{-1}(j_0),t))-(e'_a,(-t(t-\id)^{-1}(j_0),t))\\
        &=(0,(c-t(t-\id)^{-1}(j_0),t)-(-t(t-\id)^{-1}(j_0),t)-b'_{(j_a)_{a\in A}}(e'_a,0))\\
        &=(0,(c,\id)),
    \end{align*}
    for all $a,c\in A$.  Hence
    $\{ (0,(c,\id))\mid c\in A\}\subseteq G$. Note that
    \begin{align*}x_{(a,t(t-\id)^{-1}(j_0))}+x_{(a,t(t-\id)^{-1}(j_0))}&=(e'_a,(0,t))+(e'_a,(0,t))\\
        &=(2e'_a,2(0,t)+b'_{(j_a)_{a\in A}}(e'_a,e'_a))\\
        &=(2e'_a,(0,t^2)) \in G.
    \end{align*}
    By induction on $i$ one can prove that
    $$ix'_{(a,t(t-\id)^{-1}(j_0))}=(ie'_a,(0,t^{i}))\in G,$$
    for all positive integer $i$. Since $nf=0$ for all $f\in H'$, we have that
    $$nx'_{(a,t(t-\id)^{-1}(j_0))}=(ne'_a,(0,t^n))= (0,(0,t^n))\in
    G.$$
    Since $n$ is coprime with the order of $t$, there exists an integer $z$ such that
    $t^{nz}=t$. Hence
    $$(0,(0,t^n))^{zs}=(0,(0,t^{nzs}))=(0,(0,t^s)),$$
    for all $s\in\Z$. Thus we have  that
    $$(0,(a,t^s))=(0,(a,\id))+(0,(0,t^s))\in G,$$
    for all $a\in A$ and all $s\in \Z$.
    Let $f\in H'\setminus\{ 0\}$. There exist a positive integer $k$,  distinct elements
    $a_1,\dots ,a_k\in A$
     and $z_1,\dots ,z_k\in\Z\setminus\{ 0\}$ such that $f=\sum_{i=1}^kz_ie_{a_i}$.  Hence
    \begin{align*}(f,(a,t^s))&=\left(\sum_{i=1}^kz_ie_{a_i},(a,t^s)\right)\\
        &=(e_{a_1},(0,\id))^{z_1}\cdots (e_{a_k},(0,\id))^{z_k}(0,(a,t^s))\in G,
        \end{align*}
    for all $f\in H'$, $s\in \Z$ and all $a\in A$, because
    $$(e_c,(0,\id))=(e_c,(0,t))+(0,(0,t^{-1}))=x_{(c,t(t-\id)^{-1}(j_0))}+(0,(0,t^{-1}))\in G,$$
    for all $c\in A$,
    and $G$ is a left ideal of $B'_{(j_a)_{a\in A}}$.
    Therefore the result follows.
\end{proof}

Our next result is an easy consequence of Propositions
\ref{newindecomposable} and \ref{newirretractable}.

\begin{proposition}\label{indbis}
    If for every nonzero $a\in A$ there exists $c\in A$ such that $j_{a+c}-j_c\neq 0$ and
    $A=\gr(j_a-j_c\mid a,c\in A)_+$, then the solution
    $(X',r'_{(j_a)_{a\in A}})$ is indecomposable and irretractable.
\end{proposition}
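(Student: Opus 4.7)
The plan is to reduce everything to Propositions \ref{newindecomposable} and \ref{newirretractable} via the chain of isomorphisms already established in Section \ref{section7}. Indecomposability and irretractability are isomorphism invariants of solutions, so it suffices to verify them for the solution $(A^2,r)$ of Theorem \ref{newsol}.

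First I would recall that the map $x_{(a,c)} \mapsto x'_{(a,c)}$ gives an isomorphism $(X,r_{(j_a)_{a\in A}}) \cong (X',r'_{(j_a)_{a\in A}})$, and that the preceding proposition in Section \ref{section7} gives an isomorphism $(A^2,r) \cong (X,r_{(j_a)_{a\in A}})$ via $(a,c)\mapsto x_{(a,c)}$. Composing these two, it is enough to prove that $(A^2,r)$ is indecomposable and irretractable under the two hypotheses on the family $(j_a)_{a\in A}$.

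Next, I would match the hypotheses against the criteria already proved. Setting $W=\gr(j_a-j_c\mid a,c\in A)$ as in Proposition \ref{newindecomposable}, the assumption $A=\gr(j_a-j_c\mid a,c\in A)_+$ is exactly the statement $W=A$. Proposition \ref{newindecomposable} then yields that the orbit of $(0,0)$ under $\mathcal{G}(A^2,r)$ equals $A\times C$ with $C\supseteq W=A$, hence equals $A^2$, so $(A^2,r)$ is indecomposable. Similarly, the hypothesis that for every nonzero $a\in A$ there exists $c\in A$ with $j_{a+c}-j_c\neq 0$ is the precise necessary and sufficient condition of Proposition \ref{newirretractable}, which immediately gives irretractability of $(A^2,r)$.

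Finally, transporting these two properties back along the isomorphisms gives that $(X',r'_{(j_a)_{a\in A}})$ is indecomposable and irretractable, which is the claim. There is no genuine obstacle here: the whole work has already been done in Propositions \ref{newindecomposable} and \ref{newirretractable}, and this statement is simply their packaging in the language of the asymmetric-product construction of Section \ref{section7}.
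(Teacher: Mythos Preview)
Your proposal is correct and follows exactly the route the paper indicates: it states the result as an easy consequence of Propositions \ref{newindecomposable} and \ref{newirretractable}, transported along the isomorphisms $(A^2,r)\cong(X,r_{(j_a)_{a\in A}})\cong(X',r'_{(j_a)_{a\in A}})$ established earlier in Section \ref{section7}. There is nothing to add.
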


\begin{proposition}\label{Bcbis}
    Suppose that $A$ has finite exponent $n>1$.
    Assume that the order of $t$ and $n$ are coprime. Then
    $$B'_{(j_a)_{a\in A}}/\soc(B'_{(j_a)_{a\in A}})\cong\mathcal{G}(X',r'_{(j_a)_{a\in A}})$$
    as left braces and
    $$\soc(B'_{(j_a)_{a\in A}})=
    \{ (f,(0,\id))\in B'_{(j_a)_{a\in A}}\mid b'_{(j_a)_{a\in A}}(f,h)=(0,\id) \mbox{ for all  }h\in H'\}.$$
    In particular, the left braces $B'_{(j_a)_{a\in A}}$ and
    $\mathcal{G}(X',r'_{(j_a)_{a\in A}})$ are isomorphic if and only if $b'_{(j_a)_{a\in A}}$ is non-singular.
    \end{proposition}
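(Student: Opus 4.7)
My plan is to identify $\mathcal{G}(X',r'_{(j_a)_{a\in A}})$ with $B'_{(j_a)_{a\in A}}$ modulo its socle using the first isomorphism theorem applied to the structure brace $G(X',r'_{(j_a)_{a\in A}})$, and then to read off $\soc(B'_{(j_a)_{a\in A}})$ directly from formula (\ref{lambdauibis}).

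For the isomorphism, I would first produce the natural homomorphism of left braces
\[
\psi\colon G(X',r'_{(j_a)_{a\in A}})\longrightarrow B'_{(j_a)_{a\in A}},\qquad x'_{(a,c)}\mapsto x'_{(a,c)}.
\]
This is well-defined because, by construction, $(X',r'_{(j_a)_{a\in A}})$ is the restriction to $X'$ of the solution associated to the left brace $B'_{(j_a)_{a\in A}}$, so the defining relations of the structure group hold in $B'_{(j_a)_{a\in A}}$. The image of $\psi$ is a sub-brace of $B'_{(j_a)_{a\in A}}$ containing $X'$ and is therefore an additive subgroup containing $\gr(X')_+$; by Lemma~\ref{generatorsbis} the latter equals $B'_{(j_a)_{a\in A}}$, so $\psi$ is surjective.

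Next, I would compare the canonical epimorphism $G(X',r'_{(j_a)_{a\in A}})\twoheadrightarrow\mathcal{G}(X',r'_{(j_a)_{a\in A}})$, whose kernel is $\soc(G(X',r'_{(j_a)_{a\in A}}))$, with the composition $G(X',r'_{(j_a)_{a\in A}})\xrightarrow{\psi}B'_{(j_a)_{a\in A}}\twoheadrightarrow B'_{(j_a)_{a\in A}}/\soc(B'_{(j_a)_{a\in A}})$. An easy induction on multiplicative length shows that $\lambda_g(X')\subseteq X'$ inside $G(X',r'_{(j_a)_{a\in A}})$ for every $g$ in the structure group, so, since $X'$ is an additive basis and every $\lambda$ is additive, $g\in\soc(G(X',r'_{(j_a)_{a\in A}}))$ if and only if $\lambda_g(x)=x$ for every $x\in X'$. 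Analogously, since $X'$ generates $B'_{(j_a)_{a\in A}}$ additively (by Lemma~\ref{generatorsbis}), $\psi(g)\in\soc(B'_{(j_a)_{a\in A}})$ if and only if $\lambda_{\psi(g)}(x)=x$ for every $x\in X'$. The identity $\lambda_{\psi(g)}(x)=\psi(\lambda_g(x))$ for $x\in X'$, together with the injectivity of $\psi|_{X'}$, makes the two conditions equivalent, so the kernels agree and the induced map on the quotients yields the desired isomorphism $B'_{(j_a)_{a\in A}}/\soc(B'_{(j_a)_{a\in A}})\cong\mathcal{G}(X',r'_{(j_a)_{a\in A}})$ of left braces.

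Finally, I would compute $\soc(B'_{(j_a)_{a\in A}})$ by inserting formula (\ref{lambdauibis}) into the defining condition $\lambda^{(j_a)_{a\in A}}_{(f,(c,t^z))}=\id$ on $B'_{(j_a)_{a\in A}}$. Equality in the first coordinate demands $\alpha(c,t^z)=\id$ on $H'$, equivalently $g_{(c,t^z)}=\id_A$; using the explicit formula $g_{(c,t^z)}(x)=t^{-z}(x-c-(t-\id)^{-2}(t^{z+1}-t)(j_0))$, this forces $t^z=\id$ (otherwise $1-t^z$ is a nonzero additive map that cannot be constant) and then $c=0$ (since the correction $(t-\id)^{-2}(t^{z+1}-t)(j_0)$ vanishes once $t^{z+1}=t$). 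The second coordinate then collapses to $b'_{(j_a)_{a\in A}}(f,h)=(0,\id)$ for every $h\in H'$, giving the stated description of the socle. The ``in particular'' is then immediate: $B'_{(j_a)_{a\in A}}\cong\mathcal{G}(X',r'_{(j_a)_{a\in A}})$ holds iff $\soc(B'_{(j_a)_{a\in A}})=0$, which by the above description is precisely the non-singularity of $b'_{(j_a)_{a\in A}}$. The main delicate point is the second step, where one has to juggle the three brace structures at once and in particular verify the stability $\lambda_g(X')\subseteq X'$, so that both socle conditions can be tested on the same finite set; the third step is a routine unwinding of formulas.
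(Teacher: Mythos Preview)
Your proof is correct and follows essentially the same route as the paper: both use the canonical brace homomorphism $G(X',r'_{(j_a)_{a\in A}})\to B'_{(j_a)_{a\in A}}$ (your $\psi$, the paper's $\varphi'$), invoke Lemma~\ref{generatorsbis} for surjectivity, and read the socle off from (\ref{lambdauibis}). The only organizational difference is that the paper routes the isomorphism through the intermediate object $\mathcal{G}(B'_{(j_a)_{a\in A}},r_{B'_{(j_a)_{a\in A}}})$ via the well-known identification $B/\soc(B)\cong\mathcal{G}(B,r_B)$ and then checks that restriction to $X'$ gives a brace isomorphism, whereas you directly compare the kernels of $G(X',r')\twoheadrightarrow\mathcal{G}(X',r')$ and $G(X',r')\twoheadrightarrow B'/\soc(B')$ using the $\lambda$-stability of $X'$; these are two phrasings of the same verification.
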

\begin{proof}
    Note that by the definition of the lambda map of $B'_{(j_a)_{a\in A}}$  and the definition of
    $\alpha'$, it is easy to see that $(f,(a,t^s))\in \soc(B'_{(j_a)_{a\in A}})$
    if and only if $(a,t^s)=(0,\id)$ and $b'_{(j_a)_{a\in A}}(f,h)=(0,\id)$,
    for all $h\in H'$. Thus
    $$\soc(B'_{(j_a)_{a\in A}})=
    \{ (f,(0,\id))\in B'_{(j_a)_{a\in A}}\mid b'_{(j_a)_{a\in A}}(f,h)=(0,\id)
    \mbox{ for all  }h\in H'\}.$$

    Let
    $(B'_{(j_a)_{a\in A}},r_{B'_{(j_a)_{a\in A}}})$ be the solution
    associated to the left brace $B'_{(j_a)_{a\in A}}$, that is,
    $r_{B'_{(j_a)_{a\in A}}}\colon B'_{(j_a)_{a\in A}}\times
    B'_{(j_a)_{a\in A}}\longrightarrow B'_{(j_a)_{a\in A}}\times
    B'_{(j_a)_{a\in A}}$ is defined by
    \begin{eqnarray*}\lefteqn{r_{B'_{(j_a)_{a\in A}}}((f,(a_1,t^{s_1})),(g,(a_2,t^{s_2})))}\\
        &=& (\lambda_{(f,(a_1,t^{s_1}))} (g,(a_2,t^{s_2})),
        \lambda^{-1}_{\lambda_{(f,(a_1,t^{s_1}))}(g,(a_2,t^{s_2}))} (f,(a_1,t^{s_1}))),\end{eqnarray*}
    for all $f,g\in H'$, $a_1,a_2\in A$
    and integers $s_1,s_2$, where $\lambda$ is the lambda map of the left brace $B'_{(j_a)_{a\in A}}$.
    It is well-known that
    $$B'_{(j_a)_{a\in A}}/\soc(B'_{(j_a)_{a\in A}})\cong\mathcal{G}(B'_{(j_a)_{a\in A}},r_{B'_{(j_a)_{a\in A}}})$$
    as left braces. Since the order of $t$ and $n$ are coprime, by Lemma \ref{generatorsbis},
    the map
    $$\varphi\colon \mathcal{G}(B'_{(j_a)_{a\in A}},r_{B'_{(j_a)_{a\in A}}})\longrightarrow \mathcal{G}(X',r_{(j_a)_{a\in A}})$$
    defined by
    $\varphi(\lambda_{(f,(a,t^s))})= \lambda_{(f,(a,t^s))}|_{X'}$,
    for all $(f,(a,t^s))\in B'_{(j_a)_{a\in A}}$, is an isomorphism of
    multiplicative groups.

    Furthermore, the inclusion map
    $X'\longrightarrow B'_{(j_a)_{a\in A}}$ is a homomorphism of
    solutions from $(X',r'_{(j_a)_{a\in A}})$ to
    $(B'_{(j_a)_{a\in A}},r_{B'_{(j_a)_{a\in A}}})$.
    Hence there exists a homomorphism of left braces
    $\varphi'\colon G(X',r'_{(j_a)_{a\in A}})\longrightarrow B'_{(j_a)_{a\in A}}$ such that
    $\varphi'(x)=x$, for all $x\in X'$.
    By Lemma \ref{generatorsbis}, $X'$ generates the additive group of
    $B'_{(j_a)_{a\in A}}$.
    Therefore $\varphi'$ is surjective and thus
    $\varphi'(\soc(G(X',r'_{(j_a)_{a\in A}})))\subseteq \soc(B'_{(j_a)_{a\in A}})$. Hence it
    induces a homomorphism of left braces
    $$\mathcal{G}(X',r'_{(j_a)_{a\in A}})\longrightarrow
    \mathcal{G}(B'_{(j_a)_{a\in A}},r_{B'_{(j_a)_{a\in A}}}),$$
    which is exactly $\varphi^{-1}$. Therefore the result follows.
\end{proof}

Now we can prove the main result of this section.

\begin{theorem}\label{newasym}
    Suppose that $A$ has finite exponent $n>1$.
    Assume that the order of $t$ and $n$ are coprime. Let
    $$H_1=\{ f\in H'\mid b'_{(j_a)_{a\in A}}(f,h)=(0,\id) \mbox{ for all  }h\in H'\}$$
    Then $H_1$ is a subgroup of $H'$. Let $\overline{\alpha}\colon A_1\longrightarrow \Aut(H'/H_1)$
    be the map defined by
    $$\overline{\alpha}(a,t^s)(f+H_1)=\alpha'(a,t^s)(f)+H_1,$$
    for all $f\in H'$, $a\in A$ and any integer $s$.
    Let $\overline{b_{(j_a)_{a\in A}}}\colon H'/H_1\times H'/H_1\longrightarrow A_1$
    be the map defined by
    $$\overline{b_{(j_a)_{a\in A}}}(f+H_1,g+H_1)=b'_{(j_a)_{a\in A}}(f,g),$$
    for all $f,g\in H'$. Then
    $$\mathcal{G}(X',r'_{(j_a)_{a\in A}})\cong H'/H_1\rtimes_{\circ} A_1,$$
    the asymmetric product of the trivial brace $H'/H_1$ and the left brace $A_1$
    via $\overline{\alpha}$ and $\overline{b_{(j_a)_{a\in A}}}$, as left braces.
\end{theorem}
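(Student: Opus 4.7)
The strategy is to combine Proposition \ref{Bcbis}, which identifies $\mathcal{G}(X',r'_{(j_a)_{a\in A}})$ with $B'_{(j_a)_{a\in A}}/\soc(B'_{(j_a)_{a\in A}})$, with the observation that the elements of $\soc(B'_{(j_a)_{a\in A}})$ are precisely those of the form $(f,(0,\id))$ with $f\in H_1$. Hence it suffices to exhibit a left brace isomorphism $B'_{(j_a)_{a\in A}}/\soc(B'_{(j_a)_{a\in A}})\cong H'/H_1\rtimes_{\circ}A_1$.

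First I would check the basic well-definedness statements. The set $H_1$ is a subgroup of $H'$ by bilinearity of $b'_{(j_a)_{a\in A}}$. The map $\overline{b_{(j_a)_{a\in A}}}$ is well defined by the very definition of $H_1$. For $\overline{\alpha}$, I would use equation (\ref{asymmetric}): given $f\in H_1$ and $(c,t^z)\in A_1$, any $h\in H'$ can be written as $h=\alpha'(c,t^z)(h')$ for some $h'\in H'$ because $\alpha'(c,t^z)$ is an automorphism of $H'$, so
\[ b'_{(j_a)_{a\in A}}(\alpha'(c,t^z)(f),h) = b'_{(j_a)_{a\in A}}(\alpha'(c,t^z)(f),\alpha'(c,t^z)(h')) = \lambda_{(c,t^z)}(b'_{(j_a)_{a\in A}}(f,h'))=(0,\id). \]
Thus $\alpha'(c,t^z)(H_1)\subseteq H_1$, and $\overline{\alpha}$ is a well-defined homomorphism from $(A_1,\circ)$ into $\Aut(H'/H_1,+)$.

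Next I would verify that the compatibility condition (\ref{asymmetric}) for $\overline{\alpha}$ and $\overline{b_{(j_a)_{a\in A}}}$ is inherited directly from the corresponding condition for $\alpha'$ and $b'_{(j_a)_{a\in A}}$. This ensures that the asymmetric product $H'/H_1\rtimes_{\circ}A_1$ of the trivial brace $H'/H_1$ and $A_1$ is a well-defined left brace in the sense of \cite{CCS}.

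Finally I would introduce the map
\[ \psi\colon B'_{(j_a)_{a\in A}}\longrightarrow H'/H_1\rtimes_{\circ}A_1,\qquad \psi(f,(a,t^s))=(f+H_1,(a,t^s)), \]
and check it is a surjective homomorphism of left braces: the multiplicative compatibility uses only $\alpha'(a,t^s)(H_1)\subseteq H_1$, while the additive compatibility uses exactly the definition of $\overline{b_{(j_a)_{a\in A}}}$ to see that the bilinear correction descends correctly to the quotient. A direct reading of the formulas shows $\ker(\psi)=\{(f,(0,\id))\mid f\in H_1\}=\soc(B'_{(j_a)_{a\in A}})$, so $\psi$ induces an isomorphism $B'_{(j_a)_{a\in A}}/\soc(B'_{(j_a)_{a\in A}})\cong H'/H_1\rtimes_{\circ}A_1$ of left braces. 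Combined with Proposition \ref{Bcbis} this gives the statement.

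The proof is essentially bookkeeping; the only non-trivial point, and the one I would be most careful with, is verifying that $H_1$ is stable under every $\alpha'(c,t^z)$, for which the key ingredient is equation (\ref{asymmetric}) together with the fact that $\alpha'(c,t^z)$ is an automorphism of $H'$.
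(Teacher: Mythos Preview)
Your proposal is correct and follows essentially the same route as the paper: verify that $H_1$ is $\alpha'$-stable via (\ref{asymmetric}), check that $\overline{\alpha}$ and $\overline{b_{(j_a)_{a\in A}}}$ are well defined and satisfy the asymmetric-product compatibility, define the surjection $\psi(f,(a,t^s))=(f+H_1,(a,t^s))$, identify $\ker(\psi)$ with $\soc(B'_{(j_a)_{a\in A}})$, and conclude via Proposition~\ref{Bcbis}. The only cosmetic difference is that the paper computes $b'_{(j_a)_{a\in A}}(\alpha'(c,t^s)(f),\alpha'(c,t^s)(h))$ directly from (\ref{newbilin}) rather than invoking surjectivity of $\alpha'(c,t^s)$ as you do, but the argument is the same in substance.
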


\begin{proof}
    Let $f\in H_1$ and $h\in H'$. Note that  (\ref{newbilin}) is also satisfied by $\alpha'$ and $b'_{(j_a)_{a\in A}}$,
    thus we have that
    \begin{align*}
        b'_{(j_a)_{a\in A}}(\alpha'(c,t^s)(f),\alpha'(c,t^s)(h))&=\left(\sum_{x,y\in A}f(x)h(y)t^{s+1}(j_{x-y}-j_0),\id\right)\\
        &=\left(t^s\left(\sum_{x,y\in A}f(x)h(y)t(j_{x-y}-j_0)\right),\id\right)\\
        &=(0,t^s)(b'_{(j_a)_{a\in A}}(f,h))(0,t^{-s})\\
        &=(0,t^s)(0,\id)(0,t^{-s})=(0,\id),
    \end{align*}
    for all $c\in A$ and any integer $s$.
    Hence $\alpha'(c,t^s)(H_1)\subseteq H_1$, for all $c\in A$ and $s\in \Z$. Thus
    $$H_1=\alpha'(c,t^s)(\alpha'((c,t^s)^{-1})(H_1)\subseteq \alpha'(c,t^s)(H_1)$$
    and this proves that $\alpha'(c,t^s)(H_1)= H_1$, for all $c\in A$ and $s\in \Z$.
    Therefore $\overline{\alpha}$ is well-defined and
    it is a group homomorphism from the multiplicative group of the left brace $A_1$
    to $\Aut(H'/H_1)$. Clearly $\overline{b_{(j_a)_{a\in A}}}$ also is a well-defined bilinear form.
    By (\ref{asymmetric}) and the definitions of $\overline{\alpha}$ and
    $\overline{b_{(j_a)_{a\in A}}}$, we have that
    $$\lambda_{(c,t^s)}(\overline{b_{(j_a)_{a\in A}}}(f+H_1,g+H_1))=
    \overline{b}_{(j_a)_{a\in A}}(\overline{\alpha}(c,t^s)(f+H_1),\overline{\alpha}(c,t^s)(g+H_1)),$$
    for all $f,g\in H'$ and all $(c,t^s)\in A_1$, and thus we
    can consider the asymmetric product  (\cite{CCS}) of the trivial
    brace $H'/H_1$ and the left brace $A_1$ defined via
    $\overline{\alpha}$ and $\overline{b_{(j_a)_{a\in A}}}$:
    $$\overline{B_{(j_a)_{a\in A}}} = H'/H_1 \rtimes_{\circ} A_1.$$
    Let $\psi\colon B'_{(j_a)_{a\in A}}\longrightarrow  \overline{B_{(j_a)_{a\in A}}}$
    be the map defined by $\psi(f,(c,t^s))=(f+H_1,(c,t^s))$, for all $f\in H'$, $c\in A$
    and any integer $s$. It is easy to see that $\psi$ is a homomorphism of the multiplicative
    groups of these left braces.
    Also we have that
    \begin{eqnarray*}\lefteqn{
        \psi((f,(c_1,t^{s_1}))+(g,(c_2,t^{s_2})))}\\
        &=&\psi(f+g,(c_1+c_2,t^{s_1+s_2})+b'_{(j_a)_{a\in A}}(f,g))\\
        &=&((f+g)+H_1,(c_1+c_2,t^{s_1+s_2})+\overline{b_{(j_a)_{a\in A}}}(f+H_1,g+H_1))\\
        &=&(f+H_1,(c_1,t^{s_1}))+(g+H_1,(c_2,t^{s_2}))\\
        &=&\psi(f,(c_1,t^{s_1}))+\psi(g,(c_2,t^{s_2})),
    \end{eqnarray*}
    for all $f,g\in H'$, $c_1,c_2\in A$ and integers $s_1,s_2$. Hence $\psi$ is a
    homomorphism of left braces.
    Note that
    $$\ker(\psi)=\{ (f,(0,\id))\mid f\in H_1\}=\soc(B'_{(j_a)_{a\in A}})$$
    Hence
    $$\overline{B_{(j_a)_{a\in A}}}\cong B'_{(j_a)_{a\in A}}/\soc(B'_{(j_a)_{a\in A}}),$$
    and thus the result follows from Proposition \ref{Bcbis}.
\end{proof}

\begin{remark}
    {\rm In Section 4 of \cite{CO22} there is an alternative construction of the solutions
    $(A^2,r)$ described in Section 3 of \cite{CO22} using asymmetric products of the form
    $H\rtimes_{\circ} A$, where $A$ is a commutative ring and
    $$H=\{ f\colon A\longrightarrow A\mid f(x)\neq 0 \mbox{ for finitely many }x\in A\}.$$
    If we replace $A$ by any additive nontrivial abelian group $A'$ of finite exponent $n$ and this $H$ by
      $$H'=\{ f\colon A'\longrightarrow \Z/(n)\mid f(x)\neq 0 \mbox{ for finitely many }x\in A'\},$$
      then we can obtain similar results as Proposition \ref{Bcbis}  and
      Theorem~\ref{newasym}.

      In particular, this implies that the permutation groups of the solutions
      constructed in \cite{CO22}, for abelian groups of finite exponent, are asymmetric products of two trivial
      braces.}
      \end{remark}

\section{Examples of finite simple solutions of non-square cardinality}\label{section8}

  It has not been known whether there
exists a simple solution of the YBE of cardinality $m^2n$ for
integers $n,m>1$. This was claimed in Theorem 4.12 in \cite{CO21},
but the proof was incorrect. On the other hand, in \cite[Theorem
1.1]{CO23} it is proved that every finite indecomposable solution
of the YBE of square-free cardinality is a multipermutation
solution. In view of Lemma \ref{simple indec}, the next result can
be considered as the first step to construct infinite families of
simple solutions of cardinality $m^2n$ for integers $n,m>1$. This
aim will be then accomplished in Theorem~\ref{new-correction}.

\begin{proposition}\label{newsolnew}
    Let $m,n$ be integers such that $n,m>1$ and assume that $\Aut(\Z/(n))$ has an element $t$
    of order $m$. Let $X=\Z/(n)\times\Z/(m)\times\Z/(m)$ and let
    $\sigma_{(i,a,\mu)}\colon X\longrightarrow X$ be the maps defined by
    $$\sigma_{(i,a,\mu)}(j,c,\nu)=(t^{\mu}(j)+t^{a}(1), c+\mu, \nu+1-\delta_{i,t^{\mu}(j)+t^{a}(1)}),$$
    for all $i,j\in\Z/(n)$ and all $a,c,\mu,\nu\in \Z/(m)$.
    Let $r\colon X\times X\longrightarrow X\times X$ be the map defined by
    $$r((i,a,\mu),(j,c,\nu))=(\sigma_{(i,a,\mu)}(j,c,\nu),\sigma^{-1}_{\sigma_{(i,a,\mu)}(j,c,\nu)}(i,a,\mu)),$$
    for all $i,j\in\Z/(n)$ and all $a,c,\mu,\nu\in\Z/(m)$. Then $(X,r)$ is an indecomposable and irretractable solution of the YBE.
\end{proposition}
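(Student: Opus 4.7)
The plan is to verify three things: that $(X, r)$ is a solution of the YBE, that it is irretractable, and that it is indecomposable. Since $X$ is finite, the first property reduces by the criterion recalled in Section~\ref{prelim} to checking that each $\sigma_{(i, a, \mu)}$ is bijective and that $\sigma_x \sigma_{\sigma_x^{-1}(y)} = \sigma_y \sigma_{\sigma_y^{-1}(x)}$ for all $x, y \in X$. Bijectivity is immediate, with inverse $\sigma_{(i,a,\mu)}^{-1}(j, c, \nu) = (t^{-\mu}(j - t^a(1)), c - \mu, \nu - 1 + \delta_{i, j})$.

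For the compatibility condition I would write $x = (i, a, \mu)$ and $y = (j, c, \nu)$ and compute $\sigma_x \sigma_{\sigma_x^{-1}(y)}(u, d, \rho)$ coordinate by coordinate, then compare with $\sigma_y \sigma_{\sigma_y^{-1}(x)}(u, d, \rho)$. Using that $\Aut(\Z/(n))$ is abelian, so that the various powers of $t$ commute, the first coordinate on both sides simplifies to $t^{\mu + \nu - 1 + \delta_{i,j}}(u) + t^a(1) + t^c(1)$ and the second coordinate to $d + \mu + \nu - 1 + \delta_{i,j}$; both are visibly symmetric in $x$ and $y$. The main obstacle is the third coordinate, where two $\delta$-contributions accumulate and one must unwind the inner $\delta$ (produced by $\sigma_{\sigma_x^{-1}(y)}$) through the shift $t^{-\mu}(\,\cdot - t^a(1))$. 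A careful rewriting shows both sides reduce to $\rho + 2 - \delta_{i, W} - \delta_{j, W}$, where $W$ denotes the common first coordinate above, and this is symmetric in $x$ and $y$.

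For irretractability, suppose $\sigma_{(i,a,\mu)} = \sigma_{(i',a',\mu')}$. Comparing second coordinates gives $\mu = \mu'$; then comparing first coordinates forces $t^a(1) = t^{a'}(1)$, and since $t \in \Aut(\Z/(n)) \cong (\Z/(n))^{*}$ has order $m$, this yields $a = a'$ in $\Z/(m)$. Finally, as $j$ runs over $\Z/(n)$ the value $W = t^\mu(j) + t^a(1)$ sweeps all of $\Z/(n)$, so equality of third coordinates gives $\delta_{i, W} = \delta_{i', W}$ for every $W$, which forces $i = i'$.

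For indecomposability it suffices to show that the orbit of $(0,0,0)$ under $\mathcal{G}(X,r)$ equals $X$. Applying $\sigma_{(i_1, 0, c)}$ to $(0,0,0)$ produces $(1, c, \nu_1)$, and then applying $\sigma_{(i_k, 0, 0)}$, whose action on $(j, c, \nu)$ is $(j+1, c, \nu + 1 - \delta_{i_k, j+1})$, a further $j - 1$ times yields some $(j, c, \nu_0)$ in the orbit, for every target $(j, c) \in \Z/(n) \times \Z/(m)$. To adjust the third coordinate while fixing $(j, c)$, pick any $i' \in \Z/(n)$ with $i' \neq j + 1$ (possible since $n > 1$) and verify by direct computation that $\sigma_{(j+1, 0, 0)}^{-1} \sigma_{(i', 0, 0)}$ sends $(j, c, \nu)$ to $(j, c, \nu + 1)$; iterating reaches every desired third coordinate, which completes the proof.
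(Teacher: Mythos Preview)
Your proposal is correct and follows essentially the same route as the paper: explicit inverse for bijectivity, direct computation of $\sigma_x\sigma_{\sigma_x^{-1}(y)}$ yielding a formula symmetric in $x,y$ (the paper obtains exactly your expression $\rho+2-\delta_{i,W}-\delta_{j,W}$), coordinate comparison for irretractability, and explicit orbit computations for indecomposability. The only cosmetic difference is in the indecomposability step: the paper reaches $(k,c,0)$ via a single application of $\sigma_{(k,c,c)}$ and then uses the chain $\sigma_{(k+l,0,0)}\cdots\sigma_{(k,0,0)}$ to move the third coordinate, whereas you first reach an arbitrary $(j,c,\nu_0)$ and then use the neat trick $\sigma_{(j+1,0,0)}^{-1}\sigma_{(i',0,0)}$ (with $i'\neq j+1$) to increment the third coordinate while fixing the first two; both are straightforward and equivalent in spirit.
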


\begin{proof}
    First we shall check that $(X,r)$ is a solution of the YBE. Note that $\sigma_{(i,a,\mu)}\in\Sym_X$, in fact
    $$\sigma^{-1}_{(i,a,\mu)}(j,c,\nu)=(t^{-\mu}(j-t^{a}(1)),c-\mu, \nu -1+\delta_{i,j}),$$
    for all $i,j\in\Z/(n)$ and all $a,c,\mu,\nu\in\Z/(m)$.
    Let $i,j,x\in \Z/(n)$ and $a,c,y,\mu,\nu,z\in\Z/(m)$. We have that
    \begin{eqnarray*}
        \lefteqn{\sigma_{(i,a,\mu)}\sigma_{\sigma^{-1}_{(i,a,\mu)}(j,c,\nu)}(x,y,z)}\\
        &=&\sigma_{(i,a,\mu)}\sigma_{(t^{-\mu}(j-t^{a}(1)),c-\mu, \nu -1+\delta_{i,j})}(x,y,z)\\
        &=&\sigma_{(i,a,\mu)}(t^{\nu -1+\delta_{i,j}}(x)+t^{c-\mu}(1),y+\nu -1+\delta_{i,j},\\
        &&\qquad z+1-\delta_{t^{-\mu}(j-t^{a}(1)),t^{\nu -1+\delta_{i,j}}(x)+t^{c-\mu}(1)})\\
        &=&\sigma_{(i,a,\mu)}(t^{\nu -1+\delta_{i,j}}(x)+t^{c-\mu}(1),y+\nu -1+\delta_{i,j},\\
        &&\qquad z+1-\delta_{j,t^{\nu+\mu -1+\delta_{i,j}}(x)+t^{c}(1)+t^{a}(1)})\\
        &=&(t^{\nu+\mu -1+\delta_{i,j}}(x)+t^{c}(1)+t^{a}(1),y+\nu+\mu -1+\delta_{i,j},\\
        &&\qquad z+1-\delta_{j,t^{\nu+\mu -1+\delta_{i,j}}(x)+t^{c}(1)+t^{a}(1)}+1-\delta_{i,t^{\nu+\mu -1+\delta_{i,j}}(x)+t^{c}(1)+t^{a}(1)}).
    \end{eqnarray*}
    This and a symmetric argument imply that
    \begin{eqnarray} \label{YB}
        \sigma_{(i,a,\mu)}\sigma_{\sigma^{-1}_{(i,a,\mu)}(j,c,\nu)}=
        \sigma_{(j,c,\nu)}\sigma_{\sigma^{-1}_{(j,c,\nu)}(i,a,\mu)},
    \end{eqnarray}
    for all $i,j\in\Z/(n)$ and all $a,c,\mu,\nu\in\Z/(m)$.
    Since $X$ is finite, (\ref{YB}) implies that $(X,r)$ is a solution of the YBE.

    We have that
    $$\sigma_{(k,0,0)}\cdots\sigma_{(1,0,0)}(0,0,0)=\sigma_{(k,0,0)}\cdots \sigma_{(2,0,0)}(1,0,0)=\dots =(k,0,0),$$
    $$\sigma_{(k,c,c)}(t^{-c}(k)-1,0,0)=(k,c,0)$$
    and
    $$\sigma_{(k+l,0,0)}\dots\sigma_{(k,0,0)}(k,c,0)=\sigma_{(k+l,0,0)}\dots \sigma_{(k+1,0,0)}(k+1,c,1)=(k+l+1,c,l+1),$$
    for all positive integers $k,c,l$, considered modulo $n$ in the first component and modulo $m$ in the second
    and third components. Hence $\mathcal{G}(X,r)$ is a transitive subgroup of $\Sym_X$ and thus $(X,r)$ is indecomposable.

    Let $(i,a,\mu),(j,c,\nu)\in X$ be elements such that $\sigma_{(i,a,\mu)}=\sigma_{(j,c,\nu)}$.
    Since
    $$(t^{a}(1),\mu,1-\delta_{i,t^{a}(1)})=\sigma_{(i,a,\mu)}(0,0,0)=\sigma_{(j,c,\nu)}(0,0,0)
    =(t^{c}(1),\nu,1-\delta_{j,t^{c}(1)}),$$
    it follows that $\nu=\mu$, $t^{a}(1)=t^{c}(1)$,
    and thus $t^{a}=t^{c}$. Since $t$ has order $m$, we have that
    $a=c$.
    In particular
    $$\sigma_{(i,a,\mu)}=\sigma_{(j,a,\mu)}.$$
    Hence
    $$(i,0,0)=\sigma_{(i,a,\mu)}(t^{-\mu}(i-t^{a}(1)), -\mu,0)=\sigma_{(j,a,\mu)}(t^{-\mu}(i-t^{a}(1)), -\mu,0)=(i,0,1-\delta_{j,i}),$$
    and thus $i=j$.
Therefore $(X,r)$ is irretractable.
\end{proof}

\begin{remark}
    {\rm Let $k$ be a positive integer and let $p_1,\dots ,p_k$ be distinct prime numbers. Let $n_1,\dots ,n_k$
    be positive integers such that $n_1\geq 4$. Let $n=p_1^{n_1-2}p_2^{n_2}\cdots p_k^{n_k}$ and $m=p_1$.
    Since $n_1\geq 4$, $\Aut(\Z/(n))$ has an element $t$ of order $p_1$. Hence, by Proposition \ref{newsolnew},
    there exists an indecomposable and irretractable solution of the YBE of cardinality $p_1^{n_1}\cdots p_k^{n_k}$.
}
\end{remark}

In \cite{CO22} it is shown that there exists simple solutions
$(X,r)$ of the YBE of square cardinality such that
$\mathcal{G}(X,r)$ is a simple left brace. Now we shall prove that
some of the solutions constructed in Proposition \ref{newsolnew}
are simple and  their permutation groups are simple left
braces.

\begin{theorem} \label{new-correction}
    Let $p,p_1,\dots ,p_k$,  ($k\geq 1$), be distinct prime numbers such that $p$ is a divisor of $p_i-1$
    for all $i=1,\dots ,k$. Let $m_1,\dots ,m_k$ be positive integers and let $n=p_1^{m_1}\cdots p_k^{m_k}$.
    Then there exists an invertible element $t\in\Z/(n)$ of order $p$ such that $t-1$ also is
    invertible in $\Z/(n)$.
    Let $X=\Z/(n)\times\Z/(p)\times\Z/(p)$ and let $(X,r)$ be the solution of the YBE described
    in Proposition \ref{newsolnew}, with $m=p$ and the automorphism of $\Z/(n)$ given by multiplication by $t$.
    In particular
    $$\sigma_{(i,a,\mu)}(j,c,\nu)=(t^{\mu}j+t^{a}, c+\mu,
    \nu+1-\delta_{i,t^{\mu}j+t^{a}}),$$
    for all $i,j\in\Z/(n)$ and all $a,c,\mu,\nu\in\Z/(p)$.
    Then $(X,r)$ is simple. Furthermore $\mathcal{G}(X,r)$ is a simple left brace.
\end{theorem}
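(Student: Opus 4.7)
The argument has three stages: existence of $t$, simplicity of the brace $\mathcal{G}(X,r)$, and deduction of the simplicity of $(X,r)$ from brace simplicity plus irretractability.

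\textbf{Existence of $t$.} By the Chinese Remainder Theorem it suffices to find, for each $i$, an element $t_i\in(\Z/p_i^{m_i})^{\times}$ of order $p$ with $t_i-1$ a unit. Since $p\mid p_i-1$ and $p\ne p_i$, the order $p_i^{m_i-1}(p_i-1)$ of $(\Z/p_i^{m_i})^{\times}$ is divisible by $p$, so a $t_i$ of order $p$ exists. The kernel of the reduction $(\Z/p_i^{m_i})^{\times}\to(\Z/p_i)^{\times}$ is a $p_i$-group; since the order of $t_i$ is $p$ and is coprime to $p_i$, $t_i$ does not lie in this kernel, so $t_i\not\equiv 1\pmod{p_i}$ and hence $t_i-1$ is coprime to $p_i$ and a unit. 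Assembling the $t_i$ via CRT gives the required $t$.

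\textbf{Reducing solution simplicity to brace simplicity.} By Proposition~\ref{newsolnew}, $(X,r)$ is indecomposable and irretractable. Given an epimorphism $f\colon(X,r)\to(Y,s)$, the induced surjective brace morphism $\bar f\colon\mathcal{G}(X,r)\to\mathcal{G}(Y,s)$ has kernel an ideal. If $\mathcal{G}(X,r)$ is simple, this kernel is either everything, so that $\mathcal{G}(Y,s)$ is trivial and $|Y|=1$, or $\{0\}$, in which case $\bar f$ is an isomorphism; then $f(x)=f(x')$ yields $\sigma_{f(x)}=\sigma_{f(x')}$, hence $\sigma_x=\sigma_{x'}$, and irretractability forces $x=x'$.

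\textbf{Brace simplicity.} The key structural observation is that the products $\rho_{i,i'}:=\sigma_{(i,a,\mu)}\sigma_{(i',a,\mu)}^{-1}$ are independent of $a,\mu$ and act by $(j,c,\nu)\mapsto(j,c,\nu+\delta_{i',j}-\delta_{i,j})$; they commute, and generate an elementary abelian multiplicative subgroup $M\cong(\Z/p)^{n-1}$ of $\mathcal{G}(X,r)$, naturally identified with the sum-zero functions $\Z/(n)\to\Z/p$. Direct conjugation gives
\[
\sigma_{(i,a,\mu)}\,\rho_{k,k'}\,\sigma_{(i,a,\mu)}^{-1}=\rho_{t^{\mu}k+t^{a},\,t^{\mu}k'+t^{a}},
\]
so $M$ is normal in the multiplicative group. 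Applying $ab^{-1}=a-\lambda_{ab^{-1}}(b)$ together with (\ref{Lemma 2.1}) yields the additive identity $\rho_{i,i'}=\sigma_{(i,a,\mu)}-\sigma_{(i',a,\mu+1)}$ for $i\ne i'$. Using these identities I would show that the brace ideal $\widetilde M$ generated by the $\rho_{i,i'}$ fits into a short exact sequence $0\to\widetilde M\to\mathcal{G}(X,r)\to Q\to 0$ in which $Q$ acts faithfully on the first two coordinates and admits an explicit asymmetric-product description of the kind studied in Section~\ref{section7}. Simplicity then follows by establishing (i) that $\widetilde M$ is minimal among nonzero ideals and (ii) that $Q$ is itself a simple brace; the hypothesis $t-1\in(\Z/n)^{\times}$ enters each step decisively, preventing any proper invariant substructure under the combined action of multiplicative conjugation and the additive $\lambda$-map.

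The main obstacle is the joint verification of (i) and (ii). The affine action of $\langle t\rangle\ltimes\langle t^a:a\in\Z/(p)\rangle$ on $\Z/(n)$ is not transitive on all ordered pairs, so (i) cannot be deduced from conjugation alone; one must combine the conjugation formula with the $\lambda$-action on the additive generators of $\widetilde M$ and use the invertibility of $t-1$ to fill out the orbits. Step (ii) requires running the ideal analysis of Section~\ref{section7} in the present asymmetric-product setting and checking that no proper nonzero ideal of $Q$ remains, again crucially using $t-1\in\Aut(\Z/(n))$ to identify the associated bilinear form as nondegenerate.
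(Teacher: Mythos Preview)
Your existence argument for $t$ is correct and matches the paper's. Your reduction of the simplicity of $(X,r)$ to the simplicity of $\mathcal{G}(X,r)$ together with irretractability is valid (the case $\ker\bar f=\mathcal{G}(X,r)$ indeed forces $|Y|=1$ because $(Y,s)$ is indecomposable).

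The genuine gap is the third stage. What you have written for ``Brace simplicity'' is not a proof but a programme, and you say so yourself (``I would show'', ``The main obstacle is\dots''). Several of the concrete steps are missing or problematic. You verify that $M$ is normal in the multiplicative group, but you do not verify that $M$ is closed under the $\lambda$-action; in fact, applying $\lambda_g$ to your additive identity $\rho_{i,i'}=\sigma_{(i,a,\mu)}-\sigma_{(i',a,\mu+1)}$ and using (\ref{Lemma 2.1}) gives a difference $\sigma_{g(i,a,\mu)}-\sigma_{g(i',a,\mu+1)}$ whose third coordinates need not differ by~$1$, so the result is not obviously of the form $\rho_{*,*}$. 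Thus even the identification of $\widetilde M$ is open. Moreover, knowing that $\widetilde M$ is a minimal ideal and that $Q=\mathcal{G}(X,r)/\widetilde M$ is simple would not by itself prove $\mathcal{G}(X,r)$ simple: you still have to exclude ideals $I$ with $I\cap\widetilde M=0$ mapping isomorphically onto $Q$. None of this is addressed.

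The paper avoids all of this by going in the opposite direction. It does not analyse $\mathcal{G}(X,r)$ internally at all. Instead it builds an explicit left brace $B=T\rtimes_{\circ}\Z/(p)$ with $T=R\rtimes\Z/(n)$ and $R=\mathbb{F}_p[x]/(x^{n-1}+\cdots+1)$, via an asymmetric product whose bilinear form is $b(\xi^j,\xi^i)=1-\delta_{j,i}$; this $B$ is already known to be simple by \cite[Theorem~5.7]{BCJO}. It then exhibits the orbit $X'=\{(\xi^i,t^j,k)\}$ under the $\lambda$-action, checks that $\gr(X')_+=B$, and observes that the induced subsolution $(X',r')$ is isomorphic to $(X,r)$. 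Simplicity of $(X,r)$ then follows from \cite[Theorem~5.1]{CO22}, and $\mathcal{G}(X,r)\cong B/\soc(B)=B$ since a nontrivial simple brace has trivial socle. In short, the paper imports the hard step (simplicity of $B$) from the literature, whereas your plan attempts to redo it from scratch inside $\mathcal{G}(X,r)$ without carrying it through.
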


\begin{proof}
        We shall construct a simple left brace $B$ with an orbit $X'$ by the action of the lambda
        map such that $B=\gr(X')_+$.
        Thus, by \cite[Theorem 5.1]{CO22}, $(X',r')$, where
        $$r'(x,y)=(\lambda_x(y),\lambda^{-1}_{\lambda_x(y)}(x)),$$
        for all $x,y\in X'$, is a simple solution of the YBE. Then we shall prove that the solution $(X',r')$
        is isomorphic to $(X,r)$ and $\mathcal{G}(X,r)\cong B$ as left braces.

        The following construction is a particular case of the construction of simple left braces
        in \cite[Section 5.3]{BCJO}.
        In particular, as in \cite[Section 5.3]{BCJO}, since
        $p\mid p_i-1$ and $m_i>0$, for all $i=1,\dots ,k$, one can prove that there exists an
        invertible element $t\in\Z/(n)$ of order $p$  such that $t-1$ also is
        invertible in $\Z/(n)$. Let $\mathbb{F}_p$ be the field of $p$ elements.
        Let $R=\mathbb{F}_p[x]/(x^{n-1}+\dots +x+1)$, and denote $\xi:=\overline{x}\in R$.
        Let $c\in \Aut(R,+)$ be defined by
        $$c(q)=\xi q,$$
        for all $q\in R$.  Let $f\in\Aut(R,+)$ be defined by
        $$f(q(\xi))=q(\xi^{t}),$$
        for all $q(x)\in \mathbb{F}_p[x]$.
        Let $b\colon R\times R\longrightarrow \Z/(p)$ be the unique bilinear form such that
        $b(\xi^{j},\xi^{i})=1-\delta_{j,i}$ for all $i,j\in\{ 0,1,\dots ,n-2\}$
        (note that $(1,\xi,\dots ,\xi^{n-2})$ is a basis of the $\mathbb{F}_p$-vector space $R$).
        As in \cite[Section 5.3]{BCJO}, we see that
        \begin{itemize}
            \item[(1)] $b$ is non-degenerate,
            \item[(2)] $f$ and $c$ are in the orthogonal group of $b$ and satisfy
            $fc=c^{t}f$,
            \item[(3)] $c-\id$ is invertible.
        \end{itemize}
        Consider the trivial left braces $(R,+,+)$ and $(\Z/(n),+,+)$.
        Let $T=R\rtimes \Z/(n)$ be the semidirect product of the trivial left braces $R$ and
        $\Z/(n)$ via the action
        $$\beta \colon \Z/(n)\longrightarrow \Aut(R,+)$$
        defined by $\beta_{a}(u)=c^{a}(u)$, where $\beta(a)=\beta_a$, for all $a\in\Z/(n)$ and $u\in R$.

        We define the map $\alpha\colon (\Z/(p),+)\longrightarrow \Aut(T,+,\cdot)$ by
        $$\alpha_{\mu}(u,a)=(f^{\mu}(u),t^{\mu}a),$$
        where $\alpha(\mu)=\alpha_{\mu}$, for all $\mu\in\Z/(p)$, all $a\in\Z/(n)$ and all $u\in R$.

        Let $\bar{b} \colon T\times T\longrightarrow \Z/(p)$ be the symmetric bilinear form defined by
        $$\bar{b}((u,a),(v,a'))=b(u,v),$$
        for all $a,a'\in \Z/(n)$ and all $u,v\in R$.

        Let $B=T\rtimes_{\circ} \Z/(p)$ be the asymmetric product of $T$ by $\Z/(p)$ via $\bar{b}$ and $\alpha$.

        By \cite[Theorem 5.7]{BCJO}, $B$ is a simple left brace.

        Note that the lambda map of the left brace $B$ satisfies that
        $$\lambda_{(u,a,\mu)}(v,a',\mu')=(c^{a}f^{\mu}(v), t^{\mu}a',\mu'-b(u,c^{a}f^{\mu}(v))),$$
        for all $u,v\in R$, all $a,a'\in \Z/(n)$ and $\mu,\mu'\in\Z/(p)$.

        Let $X'=\{ (\xi^{i}, t^{j}, k) \mid i\in \Z/(n), \, j,k\in\Z/(p)\} \subseteq B$. Since the addition in $B$ is defined by
        $$(u,a,\mu)+(v,a',\mu')=(u+v,a+a',\mu+\mu'+b(u,v)),$$
        for all $u,v\in R$, all $a,a'\in\Z/(n)$ and all $\mu,\mu'\in \Z/(p)$, it is clear that
        $B=\langle X'\rangle_+$. We also have that
        \begin{eqnarray*}\lambda_{(u,a,\mu)}(\xi^{i},t^{j},k)
            &=&(c^{a}f^{\mu}(\xi^{i}), t^{\mu}t^{j},k-b(u,c^{a}f^{\mu}(\xi^{i})))\\
            &=&(\xi^{a+t^{\mu}i},t^{\mu+j}, k-b(u,c^{a}f^{\mu}(\xi^{i}))\in X'.
        \end{eqnarray*}
        Let $\sigma_{(\xi^{i},t^{j},k)}\in \Sym_{X'}$ be defined by
        $$\sigma_{(\xi^{i},t^{j},k)}(\xi^{i'},t^{j'},k')=\lambda_{(\xi^{i},t^{j},k)}(\xi^{i'},t^{j'},k')=
        (\xi^{t^{j}+t^{k}i'},t^{j+j'},k'+1-\delta_{i,t^{j}+t^{k}i'}),$$
        for all $i,i'\in\Z/(n)$ and all $j,j',k,k'\in \Z/(p)$. Let $r'\colon X'\times X'\longrightarrow X'\times X'$
        be the map defined by
        $$r'((\xi^{i},t^{j},k),(\xi^{i'},t^{j'},k'))=(\sigma_{(\xi^{i},t^{j},k)}(\xi^{i'},t^{j'},k'),
        \sigma^{-1}_{\sigma_{(\xi^{i},t^{j},k)}(\xi^{i'},t^{j'},k')}(\xi^{i},t^{j},k)),$$
        for all $i,i'\in\Z/(n)$ and all $j,j',k,k'\in \Z/(p)$.
        Then $(X',r')$ is a solution of the YBE, in fact it is a subsolution of the solution
        $(B,r_B)$ of the YBE associated to the left brace $B$.
        It is easy to check that the map $g\colon (X,r)\longrightarrow (X',r')$ defined by
        $g(i,a,\mu)=(\xi^{i},t^{a},\mu)$, for all $i\in\Z/(n)$ and all $a,\mu\in \Z/(p)$, is an isomorphism
        of solutions.
        Since $(X,r)$ is indecomposable, we have that $X'$ is an orbit of $B$ by the action of the lambda map.
        By \cite[Theorem 5.1]{CO22}, $(X',r')$ is a simple solution. Therefore $(X,r)$ is a simple solution.
        Since $B=\gr(X')_+$, as in the proof of Proposition \ref{Bcbis} one can prove that
        $$\mathcal{G}(X',r')\cong B/\soc(B)$$
        as left braces. Since $B$ is a simple non-trivial left brace, $\soc(B)=\{ 0\}$. Hence
        $$\mathcal{G}(X,r)\cong \mathcal{G}(X',r')\cong B$$
        as left braces.  This completes the proof.
\end{proof}

{\bf Acknowledgments.} The first author was partially supported by
the project PID2020-113047GB-I00/AEI/10.13039/501100011033
(Spain).

\end{document}